
\documentclass[12pt]{amsart}
\addtolength{\textwidth}{7pc}
\addtolength{\textheight}{4.5pc}
\calclayout

\usepackage{amsmath}
\usepackage{amssymb}
\usepackage{amsfonts}
\usepackage{amsthm}
\usepackage{mathrsfs}
\usepackage[all]{xy}
\usepackage{enumerate}
\usepackage{graphicx}
\usepackage{romannum}
\usepackage{xcolor}

\usepackage{physics}
\usepackage{relsize}
\usepackage{hyperref}

\usepackage[autostyle]{csquotes}

\def\CC{\mathbb{C}} 
 
 
 
\def\O{\Omega} 
\def\Levi{\mathscr{L}} 
\def\Lie{\mathcal{L}}  
\def\Null{\mathcal{N}}    

\newtheorem{thm}{Theorem}[section]
\newtheorem{lem}[thm]{Lemma}
\newtheorem{prop}[thm]{Proposition}

\theoremstyle{definition}
\newtheorem{defn}[thm]{Definition}

\newtheorem{rmk}[thm]{Remark}

\numberwithin{equation}{section}


\def\BigRoman{\uppercase\expandafter{\romannumeral\number\count 255 }}
\def\Romannumeral{\afterassignment\BigRoman\count255=}

\begin{document}
	
	\renewcommand{\thepage}{\arabic{page}}
	
	\title{CR-invariance of the Steinness index}         
	\author{Jihun Yum}        
	\address{Republic of Korea}
	\email{jihun0224@gmail.com}
	\thanks{}

	\begin{abstract}
		We characterize the Diederich-Fornaess index and the Steinness index in terms of a special 1-form, which we call D'Angelo 1-form. We then prove that the Diederich-Fornaess and Steinness indices are invariant under CR-diffeomorphisms by showing CR-invariance of D'Angelo 1-forms.
	\end{abstract}
	
	\maketitle

	\section{\bf Introduction} \label{section introduction}
	
	Let $\O \subset \CC^n$$(n \ge 2)$ be a bounded domain with $C^1$-smooth boundary. 
	A $C^1$-smooth function $\rho$ defined on a neighborhood $V$ of $\overline{\O}$ is called a (global) {\it defining function} of $\O$ if $\O = \{ z \in V : \rho(z)<0 \}$ and $d\rho(z) \neq 0$ for all $z \in \partial \O$.
	The {\it Diederich-Forn{\ae}ss exponent of $\rho$} is defined by 
	$$\eta_{\rho} := \sup\{ \eta \in (0,1) : -(-\rho)^{\eta} \text{ is strictly plurisubharmonic on } \O \}.$$
	If there is no such $\eta$, then we define $\eta_{\rho} = 0$. 
	{\it The Diederich-Forn{\ae}ss index of $\O$} is defined by 
	$$DF(\O) := \sup \text{ } \eta_{\rho},$$ 
	where the supremum is taken over all defining functions $\rho$. We say that the Diederich-Forn{\ae}ss index of $\O$ exists if $DF(\O) \in (0, 1]$. If $DF(\O)$ exists, then there exists a bounded strictly plurisubharmonic exhaustion function on $\O$. In other words, $\O$ becomes a hyperconvex domain. In 1977, Diederich and Forn{\ae}ss (\cite{DieForn1977}) proved that for a bounded pseudoconvex domain $\O$, $C^2$-smoothness of $\partial \O$ implies the existence of $DF(\O)$.

	In \cite{Yum2019}, the author introduced the concept of Steinness index which is an analogue of the Diederich-Fornaess index but related to Stein neighborhood bases.
	The {\it Steinness exponent of $\rho$} is defined by 
	\begin{align*}
		\widetilde{\eta}_{\rho} := \inf\{ \widetilde{\eta} > 1 : \rho^{\widetilde{\eta}} \text{ is strictly plurisubharmonic on } \overline{\Omega}^{\complement} \cap U  &\\
		\text{ for some neighborhood } U \text{ of } & \partial \O  \},
	\end{align*}
	where $\overline{\O}^{\complement}:= \CC^n \setminus \overline{\O}$.
	If there is no such $\widetilde{\eta}$, then we define $\widetilde{\eta}_{\rho} = \infty$. 
	The {\it Steinness index of $\Omega$} is defined by 
	$$S(\Omega) := \inf \text{ } \widetilde{\eta}_{\rho},$$ 
	where the infimum is taken over all defining functions $\rho$.  We say that the Steinness index of $\O$ exists if $S(\O) \in [1,\infty)$.
	$\overline{\O}$ is said to have a {\it Stein neighborhood basis} if for any neighborhood $V_1$ of $\overline{\O}$, there exists a pseudoconvex domain $V_2$ such that $\overline{\O} \subset V_2 \subset V_1$.
	If $S(\O)$ exists, then there exist a defining function $\rho$ and $\eta_2 \in (1,\infty)$ such that $\rho^{\eta_2}$ is strictly plurisubharmonic on $\overline{\O}^{\complement} \cap U$. Thus $\overline{\O}$ has a Stein neighborhood basis. In contrast to the Diederich-Forn{\ae}ss index, the smoothness of the boundary does not imply the existence of $S(\O)$; worm domains provide an example (\cite{DieForn1977-2}). The author (\cite{Yum2019}) showed that the existence of Steinness index is equivalent to that of a strong Stein neighborhood basis. Moreover, the author found the following relation between two indices on worm domains $\O_{\beta}$;
	$$ \frac{1}{DF(\O_{\beta})} + \frac{1}{S(\O_{\beta})} = 2. $$

	\vspace{3mm}

	The purpose of this paper is to investigate whether those two indices are invariant under CR-diffeomorphisms.  
	The author (\cite{Yum2017}) proved that for relatively compact domains $\O_1$ and $\O_2$ in Stein manifolds with $C^1$-smooth boundary, if there exists CR-diffeomorphism $f : \partial \O_1 \rightarrow \partial \O_2$, then $DF(\O_1) = DF(\O_2)$. The idea of the proof is the following: we first extend $f$ to a diffeomorphism $F : U_1 \supset \overline{\O}_1 \rightarrow U_2 \supset \overline{\O}_2$ such that $F|_{\O_1} : \O_1 \rightarrow \O_2$ is a biholomorphism and $F|_{\partial \O_1} = f$. Then for $0 < \eta < 1$ and a defining function $\rho$ of $\O_2$, if $-(-\rho)^{\eta}$ is strictly plurisubharmonic on $\O_2$, then $-(-(\rho \circ F))^{\eta}$ is also strictly plurisubharmonic on $\O_1$.

	On the other hand, this approach for proving CR-invariance of the Steinness index does not work, because we can not extend CR-diffeomorphisms holomorphically outside the domain near the boundary. 
	Instead, we characterize the Steinness index as well as the Diederich-Fornaess index on the set of weakly pseudoconvex boundary points in terms of a special 1-form, which we call D'Angelo 1-form (Definition \ref{def D'Angelo 1-form}).

	\begin{thm} \label{thm main DF,S D'Angelo 1-form}
	 	Let $\Omega \subset\subset \CC^n$ be a pseudoconvex domain with $C^k (k \ge 3)$-smooth boundary. Let $\rho$ be a defining function of $\O$. Let $\alpha_{\rho}$ be a D'Angelo 1-form of $\rho$. Denote by $\omega_{\rho} := \pi_{1,0} \alpha_{\rho}$ the projection of $\alpha_{\rho}$ onto its $(1,0)$-component. Then
	 	\begin{align*}
	 		DF(\O) &= \sup_{\rho} \left\{ 0 < \eta_1 < 1 : \left( \frac{\eta_1}{1 - \eta_1}(\omega_{\rho} \wedge \overline{\omega}_{\rho}) - \overline{\partial} \omega_{\rho}  \right) (L, \overline{L}) \le 0 \phantom{aa} \forall p \in \Sigma, \phantom{a} \forall L \in \Null_p \right\}, \\
	 		S(\O) &= \inf_{\rho} \left\{ \eta_2 > 1 : \left( \frac{\eta_2}{\eta_2 - 1}(\omega_{\rho} \wedge \overline{\omega}_{\rho}) + \overline{\partial} \omega_{\rho}  \right) (L, \overline{L}) \le 0 \phantom{aa} \forall p \in \Sigma, \phantom{a} \forall L \in \Null_p \right\},
	 	\end{align*}
	 	where $\Sigma$ is the set of weakly pseudoconvex boundary points and $\Null_p$ is the null-space of Levi-form at $p$.
	 \end{thm}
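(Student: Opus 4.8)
The plan is to translate the defining relations of $DF(\O)$ and $S(\O)$ into a one-sided Hermitian inequality for $\rho$ itself, to push that inequality to the weakly pseudoconvex boundary (where the D'Angelo $1$-form records the leading transverse obstruction), and to recover each estimate by a modification of the defining function. The starting point is the elementary identities
\[ i\partial\overline{\partial}\big(-(-\rho)^{\eta_1}\big)=\eta_1\,(-\rho)^{\eta_1-2}\big((-\rho)\,i\partial\overline{\partial}\rho+(1-\eta_1)\,i\partial\rho\wedge\overline{\partial}\rho\big), \]
\[ i\partial\overline{\partial}\big(\rho^{\eta_2}\big)=\eta_2\,\rho^{\,\eta_2-2}\big(\rho\,i\partial\overline{\partial}\rho+(\eta_2-1)\,i\partial\rho\wedge\overline{\partial}\rho\big), \]
so that $-(-\rho)^{\eta_1}$ is strictly plurisubharmonic on $\O$ iff $(-\rho)\,i\partial\overline{\partial}\rho+(1-\eta_1)\,i\partial\rho\wedge\overline{\partial}\rho>0$ on $\O$, and $\rho^{\eta_2}$ is strictly plurisubharmonic on $\overline{\O}^{\complement}\cap U$ iff $\rho\,i\partial\overline{\partial}\rho+(\eta_2-1)\,i\partial\rho\wedge\overline{\partial}\rho>0$ there. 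Since $i\partial\rho\wedge\overline{\partial}\rho\ge 0$ and $\mp\rho>0$ on the relevant side, both conditions are monotone in the exponent, so $\eta_\rho$ (resp.\ $\widetilde{\eta}_\rho$) is exactly the endpoint of the (possibly empty) interval of admissible exponents for $\rho$, and it suffices to argue one admissible exponent at a time. I would then observe that at a strictly pseudoconvex boundary point $i\partial\overline{\partial}\rho$ is already positive on the complex tangent space while $i\partial\rho\wedge\overline{\partial}\rho$ controls the normal direction, so such points impose no restriction; likewise, since $\O$ is hyperconvex (its Diederich--Fornaess index exists, $\partial\O$ being of class $C^{k}$, $k\ge 2$, and $\O$ pseudoconvex), interior points are absorbed by adding a small multiple of a bounded strictly plurisubharmonic exhaustion. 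Thus all of the content sits at points $p\in\Sigma$ and directions $L\in\Null_p$.

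For the inclusions $DF(\O)\le(\text{right-hand supremum})$ and $S(\O)\ge(\text{right-hand infimum})$, fix an admissible pair, fix $p\in\Sigma$ and $L\in\Null_p$, and extend $L$ to a $(1,0)$ vector field $\widetilde{L}$ near $p$ tangent to $\partial\O$. I would plug the test vector $\widetilde{L}(q)+\lambda N(q)$ ($N$ a complex normal, $\lambda$ free) into the Hermitian inequality at points $q\to p$ along the normal to $\partial\O$, taking $q\in\O$ in the Diederich--Fornaess case and $q\in\overline{\O}^{\complement}$ in the Steinness case; here $\partial\rho(\widetilde{L})$ vanishes on $\partial\O$ and the relevant part of $i\partial\overline{\partial}\rho$ vanishes at $p$, so both are first order in $|\rho(q)|$, and the boundary contribution to the Levi term is second order along the normal because $\O$ is pseudoconvex. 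Dividing by $\rho(q)^2$, optimizing over the normal correction $\lambda$, and passing to the limit, a Taylor expansion organized by the $C^k$ ($k\ge 3$) regularity produces an inequality in which, after the algebraic regrouping that trades a multiple of $|\omega_\rho(L)|^2$ between the two sides, the limiting data is exactly $\big(\tfrac{\eta_1}{1-\eta_1}\omega_\rho\wedge\overline{\omega}_\rho-\overline{\partial}\omega_\rho\big)(L,\overline L)\le 0$ (resp.\ $\big(\tfrac{\eta_2}{\eta_2-1}\omega_\rho\wedge\overline{\omega}_\rho+\overline{\partial}\omega_\rho\big)(L,\overline L)\le 0$); the opposite sign in front of $\overline{\partial}\omega_\rho$ comes from the side from which $q$ approaches $\partial\O$, and this is precisely what makes each condition solvable, since it forces $(\overline{\partial}\omega_\rho)(L,\overline L)\ge 0$ (resp.\ $\le 0$). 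This step uses only the definition of $\alpha_\rho$ and $\omega_\rho=\pi_{1,0}\alpha_\rho$, plus compactness of $\Sigma$ and of the unit null bundle over it, which make the limits uniform and the passage to $\sup_\rho$ / $\inf_\rho$ legitimate.

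For the reverse inclusions I would run a modification argument. Given a defining function $\rho$ and $0<\eta_1<1$ with $\big(\tfrac{\eta_1}{1-\eta_1}\omega_\rho\wedge\overline{\omega}_\rho-\overline{\partial}\omega_\rho\big)(L,\overline L)\le 0$ on $\Sigma$, I would look for $\widehat{\rho}=\rho\,e^{\psi}$ with $\psi$ chosen so that, with $\eta_1-\varepsilon$ in place of $\eta_1$, the Hermitian inequality for $\widehat{\rho}$ — which by the previous step holds to leading order along $\Sigma$ — is upgraded to a strict inequality on a one-sided neighborhood of $\partial\O$; combining this with the automatic positivity at strictly pseudoconvex boundary points and with a strictly plurisubharmonic exhaustion on a compact interior part makes $-(-\widehat{\rho})^{\eta_1-\varepsilon}$ strictly plurisubharmonic on $\O$, whence $\eta_1-\varepsilon\le DF(\O)$; letting $\varepsilon\to 0$ and then taking the supremum over admissible $(\rho,\eta_1)$ gives $(\text{right-hand supremum})\le DF(\O)$. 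The argument for $S(\O)$ is the mirror image: under the corresponding hypothesis on $\eta_2$ one builds $\widehat{\rho}$ so that $\widehat{\rho}^{\,\eta_2+\varepsilon}$ is strictly plurisubharmonic on a one-sided neighborhood of $\partial\O$ in $\overline{\O}^{\complement}$, hence $S(\O)\le\eta_2+\varepsilon$, and the resulting $\widehat{\rho}$ exhibits a member of a Stein neighborhood basis of $\overline{\O}$.

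I expect the reverse inclusion to be the main obstacle: choosing the modifying factor $\psi$ so that the positivity it creates in the Levi-null directions is not destroyed by the cross terms it introduces in the transverse and strictly pseudoconvex directions, and then upgrading the resulting local, one-sided estimate to a genuine global defining function without losing any of the exponent. A secondary technical point is the bookkeeping in the easy direction, namely verifying that the optimized limit is literally the D'Angelo expression and nothing more — this is exactly where the precise definition of $\alpha_\rho$ enters and where the $C^3$-regularity needed for $\overline{\partial}\omega_\rho$ to be continuous on $\Sigma$ is used — together with the uniformity in $(p,L)$ over the compact set $\Sigma$.
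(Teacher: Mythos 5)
Your architecture — derive the boundary inequality by a normal blow-up of the Hermitian form of $-(-\rho)^{\eta_1}$ (resp.\ $\rho^{\eta_2}$), then recover the exponent by modifying the defining function — is the strategy behind the characterizations that this theorem rests on, but as written it has two genuine gaps. The first is the reverse inclusion, which you yourself flag as ``the main obstacle'' and then do not carry out: you posit $\widehat{\rho}=\rho e^{\psi}$ with $\psi$ ``chosen so that'' the inequality on $\Sigma$ upgrades to strict plurisubharmonicity on a one-sided neighborhood, but producing such a $\psi$ (controlling the cross terms it creates in the normal and strictly pseudoconvex directions, and handling the degeneration as one leaves $\Sigma$) is precisely the analytic core of the result. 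The paper does not redo this; it quotes the characterizations $DF(\O)=\sup_{\rho}\{\eta_1: \frac{1}{1-\eta_1}\frac{|\Levi_{\rho}(L,N)|^2}{\lVert\nabla\rho\rVert^2}+\frac{1}{2}\frac{N\Levi_{\rho}(L,L)}{\lVert\nabla\rho\rVert}\le 0 \text{ on } \Sigma_L\}$ and its Steinness analogue as Theorems \ref{thm DF Yum2019} and \ref{thm S Yum2019} (from \cite{Yum2019}, after \cite{Liu2017}), where this construction is performed. A plan that re-proves those theorems must actually supply the construction; ``I would look for $\psi$'' is not a step.

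The second gap is the identification of the limiting data with the D'Angelo expression, which you dismiss as ``bookkeeping'' and ``algebraic regrouping.'' The blow-up produces the real quantities $\frac{|\Levi_{\rho}(L,N)|^2}{\lVert\nabla\rho\rVert^2}$ and $\frac{N\Levi_{\rho}(L,L)}{\lVert\nabla\rho\rVert}$; converting the resulting inequality into $\bigl(\frac{\eta_1}{1-\eta_1}(\omega_{\rho}\wedge\overline{\omega}_{\rho})-\overline{\partial}\omega_{\rho}\bigr)(L,\overline{L})\le 0$ requires three nontrivial inputs that never appear in your sketch: (i) the Boas--Straube identity $\alpha_{\rho}(\overline{L})=2\Levi_{\rho}(N,L)/\lVert\nabla\rho\rVert$ (Lemma \ref{lem alpha(L)}), which turns $|\Levi_{\rho}(L,N)|^2/\lVert\nabla\rho\rVert^2$ into $\frac{1}{4}(\omega_{\rho}\wedge\overline{\omega}_{\rho})(L,\overline{L})$; (ii) the identity $\frac{|\Levi_{\rho}(L,N)|^2}{\lVert\nabla\rho\rVert^2}+\frac{1}{2}\frac{N\Levi_{\rho}(L,L)}{\lVert\nabla\rho\rVert}=\frac{i}{8}(d^c\alpha_{\rho})(L,\overline{L})$ at $L\in\Null_p$ (Proposition \ref{prop d^c(alpha)(L,L)}), whose proof uses pseudoconvexity through the Cauchy--Schwarz argument of Lemma \ref{orthgonal} and the fact that $\nabla_{\overline{L}}L$ is tangential when $\Levi_{\rho}(L,L)=0$; and, crucially, (iii) the Boas--Straube vanishing $d\alpha_{\rho}(L,\overline{L})=0$ for $L\in\Null_p$ (Proposition \ref{prop d(alpha) vanishes}), which is what gives $\frac{i}{2}d^c\alpha_{\rho}(L,\overline{L})=-\overline{\partial}\omega_{\rho}(L,\overline{L})$ and, in particular, is what makes $\overline{\partial}\omega_{\rho}(L,\overline{L})$ real so that the stated inequalities are even meaningful. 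Item (iii) is a theorem about pseudoconvex hypersurfaces, not an algebraic regrouping, and without it your limit does not land on the expression in the statement. To complete your proposal you would need to prove (or cite) Theorems \ref{thm DF Yum2019}--\ref{thm S Yum2019} for the hard direction and supply (i)--(iii) for the translation into $\omega_{\rho}$ and $\overline{\partial}\omega_{\rho}$.
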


	 Then we prove that the two indices are invariant under CR-diffeomorphism by showing that D'Angelo 1-forms are invariant under CR-diffeomorphisms (Proposition \ref{prop CR-invariance of D'Angelo 1 form}).

	 \begin{thm} \label{thm main CR-invariance of two indices}
	 	Let $\O_1$ and $\O_2$ be bounded domains in $\CC^n$ with $C^k (k \ge 3)$-smooth boundaries. If $\partial \O_1$ and $\partial \O_2$ are CR-equivalent then 
	 	$$ DF(\O_1) = DF(\O_2) \phantom{aaa} \text{and} \phantom{aaa} S(\O_1) = S(\O_2) $$
	 	hold. 
	 \end{thm}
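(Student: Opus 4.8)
The plan is to derive everything from the boundary-intrinsic description of the two indices in Theorem~\ref{thm main DF,S D'Angelo 1-form} together with the CR-invariance of D'Angelo $1$-forms (Proposition~\ref{prop CR-invariance of D'Angelo 1 form}). Since pseudoconvexity of a bounded domain is determined by the CR structure of its boundary, we may assume that $\O_1$ and $\O_2$ are both pseudoconvex: in the contrary case $DF(\O_i)=0$ and $S(\O_i)=\infty$ for $i=1,2$, so both identities are trivial. Fix a CR-diffeomorphism $f\colon\partial\O_1\to\partial\O_2$ and write $df$ for the induced $\CC$-linear map on complexified tangent spaces.

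The first step is purely CR-geometric. Because $df$ maps $T^{1,0}\partial\O_1$ onto $T^{1,0}\partial\O_2$ and carries the Levi form of $\partial\O_1$ into a positive multiple of that of $\partial\O_2$, it takes the set $\Sigma_1$ of weakly pseudoconvex points of $\partial\O_1$ bijectively onto $\Sigma_2$, and for each $p\in\Sigma_1$ the map $df_p$ restricts to a linear isomorphism $\Null_p\cong\Null_{f(p)}$ of Levi null spaces. The second step transports defining functions together with their $1$-forms. Given a defining function $\rho$ of $\O_2$, extend $f$ to a (merely smooth) diffeomorphism $F$ of neighborhoods of $\overline{\O}_1$ and $\overline{\O}_2$ with $F(\O_1)=\O_2$, and put $\tilde\rho:=\rho\circ F$, a defining function of $\O_1$; every defining function of $\O_1$ arises this way (from $\rho=\tilde\rho\circ F^{-1}$), so this is a bijection of defining functions. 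By Proposition~\ref{prop CR-invariance of D'Angelo 1 form} the D'Angelo $1$-form is CR-invariant, whence $\omega_{\tilde\rho}=f^{*}\omega_{\rho}$ along $\partial\O_1$; in particular the non-holomorphic part of $F$ does not affect this component. Since $f$ is a CR map it preserves the type decomposition of $1$-forms along the boundary and intertwines the tangential $\overline{\partial}$-operators, so for tangential $L$ one also has $(\overline{\partial}\omega_{\tilde\rho})(L,\overline{L})=(\overline{\partial}\omega_{\rho})\bigl(df_pL,\overline{df_pL}\bigr)$. Here the restriction to $L\in\Null_p\subset T^{1,0}$ is exactly what keeps the pairings intrinsic: for such $L$ the Levi non-integrability vanishes, $[L,\overline L]$ stays tangent to the contact distribution, and the characteristic-direction ambiguity in $\omega_\rho$ drops out.

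Now fix $0<\eta_1<1$. By naturality of pullback, for every $p\in\Sigma_1$ and $L\in\Null_p$,
\[
\left(\frac{\eta_1}{1-\eta_1}\,\omega_{\tilde\rho}\wedge\overline{\omega}_{\tilde\rho}-\overline{\partial}\omega_{\tilde\rho}\right)(L,\overline{L})
=\left(\frac{\eta_1}{1-\eta_1}\,\omega_{\rho}\wedge\overline{\omega}_{\rho}-\overline{\partial}\omega_{\rho}\right)\!\bigl(df_pL,\,\overline{df_pL}\bigr),
\]
and as $(p,L)$ ranges over all pairs with $p\in\Sigma_1$ and $L\in\Null_p$, the arguments on the right range over all pairs $(q,L')$ with $q\in\Sigma_2$ and $L'\in\Null_q$. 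Hence $\eta_1$ satisfies the defining inequality for $(\tilde\rho,\O_1)$ precisely when it does for $(\rho,\O_2)$. Taking the supremum over all defining functions and using the bijection $\tilde\rho\leftrightarrow\rho$ gives $DF(\O_1)\ge DF(\O_2)$; applying the same reasoning to $f^{-1}$ yields the reverse inequality, so $DF(\O_1)=DF(\O_2)$. Repeating the argument verbatim with $\tfrac{\eta_2}{\eta_2-1}(\omega\wedge\overline{\omega})+\overline{\partial}\omega$ in place of $\tfrac{\eta_1}{1-\eta_1}(\omega\wedge\overline{\omega})-\overline{\partial}\omega$, with infima in place of suprema and ``$\eta_2>1$'' in place of ``$0<\eta_1<1$'', gives $S(\O_1)=S(\O_2)$.

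The real content — and the main obstacle — is packaged into Proposition~\ref{prop CR-invariance of D'Angelo 1 form}. A D'Angelo $1$-form is manufactured from a defining function by means of a transverse direction and the ambient operator $\partial\overline{\partial}$, so it is not transparent that the component $\omega_\rho$ (equivalently, the quantities $\omega_\rho|_{\Sigma}$ and $\overline{\partial}\omega_\rho$ paired against $\Null$) depends only on the CR structure of $\partial\O$, nor that it is left unchanged by replacing $\rho$ with a positive multiple or by altering the non-CR part of the extension $F$. Pinning down this cancellation — i.e. showing these data are genuinely intrinsic to $(\partial\O,T^{1,0}\partial\O)$, so that a CR-diffeomorphism pulls them back to each other — is where the work lies; granting it, together with Theorem~\ref{thm main DF,S D'Angelo 1-form}, the deduction above is essentially formal bookkeeping.
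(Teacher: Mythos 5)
Your overall architecture coincides with the paper's: reduce to the boundary characterization of Theorem \ref{thm main DF,S D'Angelo 1-form}, transport defining functions across $f$, invoke Proposition \ref{prop CR-invariance of D'Angelo 1 form} on Levi-null vectors, and obtain the two inequalities by running the argument for $f$ and for $f^{-1}$. Like the paper, you record that $f$ carries $\Sigma_1$ onto $\Sigma_2$ and $\Null_p$ onto $\Null_{f(p)}$; your disposal of the non-pseudoconvex case (both indices degenerate, pseudoconvexity is detected by the boundary CR structure) is a repackaging of the paper's remark that $S(\O_1)<\infty$ forces a Stein neighborhood basis, hence pseudoconvexity, which then transfers by CR-equivalence.

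The one step that does not survive as written is: \enquote{extend $f$ to a (merely smooth) diffeomorphism $F$ \dots put $\tilde\rho:=\rho\circ F$ \dots in particular the non-holomorphic part of $F$ does not affect this component.} It does affect it. By Lemma \ref{lem alpha(L)}, $\alpha_{\tilde\rho}(\overline L)=2\,\Levi_{\tilde\rho}(N,L)/\norm{\nabla\tilde\rho}$, and the complex Hessian of $\rho\circ F$ in the normal--tangential slot involves $\overline\partial F$ and its derivatives along $\partial\O_1$; for a generic smooth extension these terms do not cancel, so $\alpha_{\rho\circ F}$ restricted to $\Null_p$ need not equal the pullback of $\alpha_\rho$, and the same applies a fortiori to $d^c\alpha_{\rho\circ F}$, which consumes one more derivative of $F$. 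Proposition \ref{prop CR-invariance of D'Angelo 1 form} is proved --- and is only true --- for the almost-holomorphic extension furnished by Boggess's theorem (Theorem \ref{thm CR-extension dbar f vanishes}), with $\overline\partial F$ vanishing on $\partial\O_1$ to order $k-1$; one then notes that this $F$ is a diffeomorphism near $\partial\O_1$ (its differential at boundary points is complex-linear and restricts to $df_p$ on $T_p\partial\O_1$, hence is invertible), so that $\rho\circ F$ really is a defining function of $\O_1$. With that substitution the rest of your bookkeeping --- including your correct observation that only the values at $p\in\Sigma_1$, $L\in\Null_p$ matter, so the full pullback identity $\omega_{\tilde\rho}=f^{*}\omega_{\rho}$ is neither supplied by Proposition \ref{prop CR-invariance of D'Angelo 1 form} nor needed --- goes through and reproduces the paper's proof.
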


	 \vspace{5mm}


	\section{\bf Preliminaries}
	
	We first fix the notation of this paper, unless otherwise mentioned.
	\begin{itemize}
		\item[$\bullet$] $\O$ : a bounded pseudoconvex domain with $C^k (k \ge 3)$-smooth boundary in $\CC^n$.
		\item[$\bullet$] $\rho$ : a defining function of $\O$.
		\item[$\bullet$] $\Sigma$ : the set of all weakly pseudoconvex points in $\partial \O$.
		\item[$\bullet$] $g$ : the standard Euclidean complex Hermitian metric in $\CC^n$.
		\item[$\bullet$] $\nabla$ : the Levi-Civita connection of $g$.
		\item[$\bullet$] $\nabla \rho$ : the real gradient of $\rho$.
		\item[$\bullet$] $U$ : a tubular neighborhood of $\partial \O$.
		\item[$\bullet$] $\Levi_{\rho}$ : the Levi-form of $\rho$.
		\item[$\bullet$] $\Null_p$ : the null-space of the Levi-form at $p \in \partial \O$.
	\end{itemize}
	Define
	$$
	N_{\rho} := \frac{1}{\sqrt{ \sum^n_{j=1} |\frac{\partial \rho}{\partial z_j}|^2}} \sum^n_{j=1} \frac{\partial \rho}{\partial \bar{z}_j} \frac{\partial}{\partial z_j}, \phantom{aaa} 
	L_{n, \rho} := \frac{1}{\sum^n_{j=1} |\frac{\partial \rho}{\partial z_j}|^2} \sum^n_{j=1} \frac{\partial \rho}{\partial \bar{z}_j} \frac{\partial}{\partial z_j}.
	$$
	We denote $N_{\rho}$ and $L_{n,\rho}$ by $N$ and $L_n$, respectively, if there is no ambiguity.
	Note that $N$ and $L_n$ depend on a defining function $\rho$, but $N$ is independent of $\rho$ on $\partial \O$. Also, note that $\Re{N_{\rho}}$ and $\Re{L_n}$ are real normal vector fields on $\partial \O$, and 
	$$ \norm{N} = \sqrt{g(N,N)} = \frac{1}{\sqrt{2}}, \phantom{aaa}    L_n \rho = 1, \phantom{aaa}   L_n = \frac{1}{\norm{\partial \rho}}N = \frac{2}{\norm{\nabla \rho}}N. $$
	Let $J$ be the complex structure of $\CC^n$ and $T_p (\partial \O)$ be the real tangent space of $\partial \O$ at $p \in \partial \O$. Let $T_p^c(\partial \O) := J(T_p (\partial \O)) \cap T_p (\partial \O)$. Then the complexified tangent space of $T_p^c(\partial \O)$, $\CC T_p^c(\partial \O) := \CC \otimes T_p^c(\partial \O)$, can be decomposed into the holomorphic tangent space $T^{1,0}_p(\partial \O)$ and the anti-holomorphic tangent space $T^{0,1}_p(\partial \O)$. We call $X$ a $(1,0)$ tangent vector if $X \in T^{1,0}_p(\partial \O)$.
	
	Let $X,Y,Z$ be complex vector fields in $\CC^n$. 
	A direct calculation implies the following properties.
	$$ \Levi_{\rho}(X,Y) = g(\nabla_X \nabla \rho, Y) = X(\overline{Y} \rho) - (\nabla_X \overline{Y}) \rho,  $$
	\begin{align*}
	Z g(X,Y) = g(\nabla_Z X, Y) + g(X, \nabla_{\overline{Z}}Y), \phantom{aa}
	Z \rho = g(\nabla \rho, \overline{Z}),
	\end{align*}
	$$ \nabla_X\nabla_Y - \nabla_Y\nabla_X - \nabla_{[X,Y]} = 0. $$
	
	Let $d = \partial + \overline{\partial}$ be the exterior derivative and $d^c = i(\overline{\partial} - \partial)$. 
	
	\begin{lem} \label{orthgonal}
		Let $\Omega \subset\subset \CC^n$ be a pseudoconvex domain with $C^k (k \ge 2)$-smooth boundary, and $\rho$ be a defining function of $\Omega$. Suppose that $\Levi_{\rho}(L,L)(p) = 0$ for $p \in \partial \O$ and $L \in T^{1,0}_p(\partial \O)$. Then $\Levi_{\rho}(L,T)(p) = 0$ for all $T \in T^{1,0}_p(\partial \O)$.
	\end{lem}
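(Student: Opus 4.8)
The statement says that when $\Levi_\rho(L,L)(p)=0$ at a boundary point $p$, the vector $L$ lies not only in the ``null cone'' of the Levi form but actually in its kernel: $\Levi_\rho(L,T)(p)=0$ for every $T\in T^{1,0}_p(\partial\O)$. This is a pointwise statement, so I would fix $p$ and work with the Hermitian form $H(X,Y):=\Levi_\rho(X,Y)(p)$ on the complex vector space $V:=T^{1,0}_p(\partial\O)$. The key structural fact to invoke is pseudoconvexity of $\O$: it forces $H$ to be positive \emph{semidefinite} on $V$. Granting this, the claim is the standard linear-algebra fact that a vector in the zero set of a positive semidefinite Hermitian form lies in its radical.

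The plan is therefore two steps. First, record that $H$ is positive semidefinite on $T^{1,0}_p(\partial\O)$; this is exactly the definition/characterization of pseudoconvexity of $\O$ at the boundary point $p$ in terms of the Levi form of a defining function, and is available because $\partial\O\in C^k$ with $k\ge 2$. Second, carry out the Cauchy--Schwarz argument: for any $T\in V$ and any $t\in\RR$ (or $\lambda\in\CC$), positive semidefiniteness gives
\[
0\le H(L+\lambda T,\, L+\lambda T)=H(L,L)+2\re\!\big(\overline{\lambda}\,H(L,T)\big)+|\lambda|^2 H(T,T)=2\re\!\big(\overline{\lambda}\,H(L,T)\big)+|\lambda|^2 H(T,T),
\]
using $H(L,L)=0$ and Hermitian symmetry. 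If $H(L,T)\ne 0$, choose $\lambda=-s\,\overline{H(L,T)}$ for small $s>0$; then the right-hand side becomes $-2s|H(L,T)|^2+s^2|H(L,T)|^2 H(T,T)$, which is strictly negative for $s$ small, a contradiction. Hence $H(L,T)=0$ for all $T\in T^{1,0}_p(\partial\O)$, which is the assertion. (One should also note the convention issue: $\Levi_\rho(L,\overline T)$ versus $\Levi_\rho(L,T)$; with the excerpt's convention $\Levi_\rho(X,Y)=g(\nabla_X\nabla\rho,Y)$ the Hermitian form pairs a $(1,0)$ vector with a $(1,0)$ vector, so the computation above is the right one, and conjugation symmetry $H(X,Y)=\overline{H(Y,X)}$ is what makes the cross term a real part.)

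The only genuine point requiring care — the ``main obstacle,'' though it is mild — is justifying that $H$ is positive semidefinite on the full holomorphic tangent space rather than merely that $H(L,L)\ge 0$ for the particular $L$. This is where pseudoconvexity enters globally: $\O$ pseudoconvex with $C^2$ boundary means $\Levi_\rho(X,X)(q)\ge 0$ for all $q\in\partial\O$ and all $X\in T^{1,0}_q(\partial\O)$, independently of the choice of defining function $\rho$. I would simply cite this standard fact. Everything else is the elementary Cauchy--Schwarz inequality for semidefinite Hermitian forms, so no substantial computation is needed; the lemma is essentially a packaging of ``null vectors of a PSD form are in its kernel'' in the geometric language the paper uses.
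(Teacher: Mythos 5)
Your proposal is correct and follows essentially the same route as the paper: the paper likewise observes that pseudoconvexity makes the Levi form positive semidefinite on $T^{1,0}_p(\partial \O)$ and then applies the Cauchy--Schwarz inequality $|\Levi_{\rho}(L,T)(p)|^2 \le |\Levi_{\rho}(L,L)(p)|\,|\Levi_{\rho}(T,T)(p)| = 0$. The only difference is that you unwind the Cauchy--Schwarz argument by hand (expanding $H(L+\lambda T, L+\lambda T)\ge 0$ and choosing $\lambda$), whereas the paper cites the inequality directly.
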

	\begin{proof}
		Since $\Omega$ is pseudoconvex, the Levi-form of a defining function $\rho$ satisfies the Cauchy-Schwarz inequality on $T^{1,0}_p(\partial \O)$. Therefore, for all $T \in T^{1,0}_p(\partial \O)$, 
		$$ |\Levi_{\rho}(L,T)(p)|^2 \le |\Levi_{\rho}(L,L)(p)| |\Levi_{\rho}(T,T)(p)| = 0, $$ which completes the proof. 
		
	\end{proof}

	 \vspace{5mm}


	\section{\bf D'Angelo 1-form} \label{section D'Angelo 1-form}

	We first define a real 1-form $\alpha$ which was first introduced by D'Angelo (\cite{D'Angelo1980}, \cite{D'Angelo1987}).
	We call it {\it D'Angelo 1-form}. For a defining function $\rho$ of $\O$, define 
	 $$ \eta_{\rho} = \frac{1}{2}\left( \partial \rho - \overline{\partial} \rho \right)
	 \phantom{aa} \text{and} \phantom{aa} 
	 T_{\rho} = L_{n, \rho} - \overline{L}_{n, \rho}. $$
	We denote $\eta_{\rho}$ and $T_{\rho}$ by $\eta$ and $T$, respectively, if there is no ambiguity.
	Then $\eta$ is a purely imaginary, non-vanishing 1-form on $\partial \O$ that annihilates $T^{1,0}(\partial \O) \oplus T^{0,1}(\partial \O)$, and $T$ is a purely imaginary tangential vector field orthogonal to $T^{1,0}(\partial \O) \oplus T^{0,1}(\partial \O)$ such that $\eta(T) \equiv 1$. 
	
	\begin{defn} \label{def D'Angelo 1-form}
		A {\it D'Angelo 1-form} $\alpha_{\rho}$ on $\partial \O$ is defined by 
		$$ \alpha_{\rho} := - \Lie_T \eta ,$$
		where $\Lie_T$ is the Lie derivative in the direction of $T$.
	\end{defn}

	\begin{rmk}
		Note that the D'Angelo 1-form is defined from a defining function. On the other hand, we can also define the D'Angelo 1-form without using a defining function. 
		Let $\eta$ is a purely imaginary, non-vanishing 1-form on $\partial \O$ that annihilates $T^{1,0}(\partial \O) \oplus T^{0,1}(\partial \O)$, and $T_{\eta}$ is the (uniqe) purely imaginary tangential vector field orthogonal to $T^{1,0}(\partial \O) \oplus T^{0,1}(\partial \O)$ such that $\eta(T) \equiv 1$. Then the D'Angelo 1-form $\alpha_{\eta}$ of $\eta$ is defined by
		$$ \alpha_{\eta} := - \Lie_{T_{\eta}} \eta ,$$
		where $\Lie_{T_{\eta}}$ is the Lie derivative in the direction of $T_{\eta}$. We claim that those two definitions are equivalent. In order words, we prove that for any $\eta$ defined above there exists a defining function $\rho$ such that $\alpha_{\rho} = \alpha_{\eta}$.
		Fix a defining function $r$ of $\O$. Then since $\eta$ is non-vanishig, there exists a smooth function $\varphi$ on $\partial \O$ such that $\eta = e^{\varphi} \eta_r$ or $\eta = - e^{\varphi} \eta_r$. First, suppose that $\eta = e^{\varphi} \eta_r$. Then $T_{\eta} = e^{-\varphi} T_r$. 
		Let $\rho := e^{\varphi} r$. Then $\eta_{\rho} = e^{\varphi} \eta_r = \eta$ and $T_{\rho} = e^{-\varphi} T_r = T_{\eta}$. Therefore, 
		$$\alpha_{\rho} = - \Lie_{T_{\rho}} \eta_{\rho} = - \Lie_{T_{\eta}} \eta = \alpha_{\eta}. $$
		If $\eta = -e^{\varphi} \eta_r$, then by the same argument above, 
		$\alpha_{\rho} = - \Lie_{T_{\rho}} \eta_{\rho} = - \Lie_{(-T_{\eta})} (-\eta) = \alpha_{\eta}$. In this paper, we use Definition \ref{def D'Angelo 1-form} for our purpose. 
	\end{rmk}

	We denote $\alpha_{\rho}$ by $\alpha$ if there is no ambiguity. Note that since $\eta$ and $T$ are purely imaginary, $\alpha$ is a real 1-form. Now we give known properties of $\alpha$ which we will use later. For more information about the D'Angelo 1-form, we refer the reader to \cite{Straube2010}.

	\begin{prop} [\cite{BoasStraube1993}] \label{prop d(alpha) vanishes}
		Let $\alpha$ be a D'Angelo 1-form on $\partial \O$. Then 
		$$ (d\alpha)_p (X,Y) = 0 $$
		for all $p \in \partial \O$ and $X, Y \in \Null_p \oplus \overline{\Null}_p $.
	\end{prop}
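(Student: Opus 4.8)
The plan is to recast $d\alpha$ as a Lie derivative via Cartan's magic formula, and then to use pseudoconvexity --- through Lemma~\ref{orthgonal} together with a minimum/polarization argument --- to annihilate all but a pair of terms that cancel each other.

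Since $\eta(T)\equiv 1$, Cartan's formula gives $\alpha = -\Lie_T\eta = -\iota_T d\eta - d(\iota_T\eta) = -\iota_T d\eta$, while a direct computation gives $d\eta = \tfrac{1}{2}(\overline{\partial}\partial\rho - \partial\overline{\partial}\rho) = -\partial\overline{\partial}\rho$; hence, using $d(\partial\overline{\partial}\rho)=0$,
\[
d\alpha \;=\; -\,d\,\iota_T d\eta \;=\; -\,\Lie_T(d\eta) \;=\; \Lie_T\!\bigl(\partial\overline{\partial}\rho\bigr).
\]
Because $T_\rho\rho = L_{n,\rho}\rho - \overline{L}_{n,\rho}\rho = 1-1 = 0$, the vector field $T$ is tangent to $\partial\O$; thus for $X,Y$ tangent to $\partial\O$ one may compute $(d\alpha)_p(X,Y)$ from the ambient $2$-form $\Lie_T(\partial\overline{\partial}\rho)$ defined near $\partial\O$, and --- $d\alpha$ being a tensor --- the answer does not depend on the chosen extensions of $X,Y$. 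As $\alpha$ is real and its two slots enter symmetrically, it suffices to treat $X=L$, $Y=\overline{L}'$ with $L,L'\in\Null_p$, which I would extend to local sections of $T^{1,0}(\partial\O)$.

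Write $\beta:=\partial\overline{\partial}\rho$; the Leibniz rule for $\Lie_T$ gives
\[
(d\alpha)_p(L,\overline{L}') \;=\; T\!\bigl(\beta(L,\overline{L}')\bigr) \;-\; \beta\!\bigl([T,L],\overline{L}'\bigr) \;-\; \beta\!\bigl(L,[T,\overline{L}']\bigr).
\]
The first term vanishes: on $\partial\O$, $\beta(L,\overline{L}')$ is the Levi form $\Levi_\rho(L,L')$ of the tangential $(1,0)$ fields $L,L'$, and by pseudoconvexity $q\mapsto\Levi_\rho(L(q),L(q))$ is $\ge 0$ near $p$ and equals $0$ at $p$ (since $L(p)\in\Null_p$), so it has a minimum at $p$ and its differential vanishes there; polarizing --- legitimate because $\Null_p$ is a linear subspace --- the differential of $q\mapsto\Levi_\rho(L(q),L'(q))$ at $p$ vanishes as well, and since $T$ is tangent to $\partial\O$ this kills the first term. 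For the remaining two I would decompose $[T,L] = A + \overline{B} + cT$ in the splitting $\CC T(\partial\O)=T^{1,0}(\partial\O)\oplus T^{0,1}(\partial\O)\oplus\CC T$, with $c=\eta([T,L])$. Using that $\beta$ kills any pair of $(0,1)$ vectors and that $\alpha=\iota_T\beta$, one gets $\beta([T,L],\overline{L}') = \beta(A,\overline{L}') + c\,\alpha(\overline{L}')$; at $p$ the first summand is $\Levi_\rho(A,L')(p)=0$ by Lemma~\ref{orthgonal} (recall $L(p),L'(p)\in\Null_p$ and $A(p)\in T^{1,0}_p(\partial\O)$), while $c = \eta([T,L]) = -\,d\eta(T,L) = \beta(T,L) = \alpha(L)$ (using $\eta(L)\equiv0$, $\eta(T)\equiv1$). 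Hence $\beta([T,L],\overline{L}')(p)=\alpha(L)(p)\,\alpha(\overline{L}')(p)$. Since $T$ is purely imaginary, $[T,\overline{L}']=-\overline{[T,L']}$, and the identical manipulation gives $\beta(L,[T,\overline{L}'])(p)=-\,\overline{\alpha(L')(p)}\,\alpha(L)(p)=-\,\alpha(\overline{L}')(p)\,\alpha(L)(p)$. Substituting into the displayed identity, the two nonzero contributions cancel, so $(d\alpha)_p(L,\overline{L}')=0$. The case $X=L$, $Y=L'$ (both in $\Null_p$) is handled identically --- now $\beta(L,L')\equiv0$ from the start --- and the $(0,1)$--$(0,1)$ case follows by conjugation.

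The single delicate point is the vanishing of the first term: it genuinely uses pseudoconvexity on a whole neighbourhood of $p$ in $\partial\O$, not merely the pointwise Levi-degeneracy at $p$; it relies on $T\rho=0$ so that $T$ detects only the intrinsic Levi form; and it needs the polarization over the linear space $\Null_p$. What remains is the bookkeeping of the transverse contributions in the last step --- checking that their coefficients match across the two bracket terms --- which is a routine calculation with the explicit expressions for $N_\rho$ and $L_{n,\rho}$.
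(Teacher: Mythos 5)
Your proof is correct. The paper itself gives no argument for this proposition --- it is quoted from Boas--Straube \cite{BoasStraube1993} --- and what you have written is essentially the standard proof from that reference: $\alpha=\iota_T\partial\overline{\partial}\rho$ via Cartan's formula, $d\alpha=\Lie_T(\partial\overline{\partial}\rho)$, the term $T\bigl(\Levi_\rho(L,L')\bigr)(p)$ killed by the interior-minimum/polarization argument (which, as you rightly stress, is where pseudoconvexity near $p$ and the tangency $T\rho=0$ enter), the $(1,0)$-components of the brackets killed by Lemma~\ref{orthgonal}, and the two transverse contributions $\mp\,\alpha(L)\,\alpha(\overline{L}')$ cancelling. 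I checked the bookkeeping you deferred --- $c=\eta([T,L])=\beta(T,L)=\alpha(L)$, and $[T,\overline{L}']=-\overline{[T,L']}$ giving the opposite sign in the second bracket term --- and it comes out as you claim; the same cancellation also handles the $(1,0)$--$(1,0)$ case. No gaps.
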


	\begin{lem}[\cite{BoasStraube1993}] \label{lem family of good vector fields 1}
		Let $L_1, L$ be smooth $(1,0)$ tangent vector fields on $\partial \O$. Then
		$$ \partial \rho ( [L_n, \overline{L}] ) = \Levi_{\rho}(L_n,L) ,$$
		and
		$$ \partial \rho ( [L_1, \overline{L}] ) = \Levi_{\rho}(L_1,L) $$
		on $\partial \O$.
	\end{lem}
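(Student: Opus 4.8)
The plan is to obtain both identities as special cases of one formula. Extend $L$ (and, for the second identity, $L_1$) to a smooth $(1,0)$ vector field on a neighborhood of $\partial \O$; I claim that for any such $L$ and any smooth $(1,0)$ vector field $X$ defined near $\partial \O$,
\begin{equation*}
\partial\rho\big([X,\overline{L}]\big) = \Levi_\rho(X,L) - \overline{L}(X\rho)
\end{equation*}
holds near $\partial \O$, and that all three terms, restricted to $\partial \O$, are independent of the chosen $(1,0)$-extension of $L$. The independence is the one point to verify by hand: two $(1,0)$-extensions of $L$ differ by $\rho Z$ with $Z$ of type $(1,0)$, so the corresponding brackets $[X,\overline{L}]$ differ on $\partial \O$ by $(X\rho)\overline{Z}$, which $\partial\rho$ annihilates since $\overline{Z}$ is of type $(0,1)$; the other two terms are handled the same way.

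To prove the displayed identity I would apply the invariant formula for the exterior derivative of a $1$-form to $\beta := \partial\rho$:
\begin{equation*}
d\beta(X,\overline{L}) = X\big(\beta(\overline{L})\big) - \overline{L}\big(\beta(X)\big) - \beta\big([X,\overline{L}]\big).
\end{equation*}
Here $\beta(\overline{L}) = 0$ because $\beta$ is of type $(1,0)$, and $\beta(X) = X\rho$ for the same reason (together with $\overline{\partial}\rho(X)=0$). Moreover $d(\partial\rho) = \overline{\partial}\partial\rho = -\partial\overline{\partial}\rho$ and $\partial\overline{\partial}\rho(X,\overline{L}) = \Levi_\rho(X,L)$, which agrees with the preliminaries' identity $\Levi_\rho(X,Y) = X(\overline{Y}\rho) - (\nabla_X\overline{Y})\rho$, as one checks in coordinates using flatness of $\nabla$ in $\CC^n$. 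Substituting the three evaluations into the invariant formula gives $-\Levi_\rho(X,L) = -\overline{L}(X\rho) - \partial\rho([X,\overline{L}])$, which is the claim.

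It remains to specialize $X$. For the first identity take $X = L_n$: by definition $L_n\rho \equiv 1$ wherever $\partial\rho\neq 0$, hence $\overline{L}(X\rho) = \overline{L}(1) = 0$ and $\partial\rho([L_n,\overline{L}]) = \Levi_\rho(L_n,L)$ near $\partial \O$. For the second identity take $X = L_1$, a $(1,0)$ tangent vector field: then $L_1\rho = 0$ on $\partial \O$, and since $\overline{L}$ is tangent to $\partial \O$ the derivative along $\partial \O$ of the function $L_1\rho$ vanishes there, so $\overline{L}(L_1\rho) = 0$ on $\partial \O$, giving $\partial\rho([L_1,\overline{L}]) = \Levi_\rho(L_1,L)$ on $\partial \O$.

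I do not expect a genuine obstacle. The only places calling for care are the extension/well-definedness remark and keeping the bidegree bookkeeping straight — in particular noting that $[L_1,\overline{L}]$ need not be a section of $T^{1,0}(\partial \O)\oplus T^{0,1}(\partial \O)$, only of $\CC T(\partial \O)$, which is exactly what makes $\partial\rho([L_1,\overline{L}])$ a generally nonzero quantity rather than something that vanishes for trivial reasons. A fully computational alternative is also available: write $L_n = (\sum_j |\partial\rho/\partial z_j|^2)^{-1}\sum_j (\partial\rho/\partial\bar z_j)\,\partial/\partial z_j$ and $L = \sum_j b_j\,\partial/\partial z_j$, expand the brackets in coordinates, and use $L_n\rho\equiv 1$ for the first identity and $\sum_j b_j\,\partial\rho/\partial z_j = 0$ on $\partial \O$ together with the tangency of $\overline{L}$ for the second.
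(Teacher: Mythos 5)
Your proof is correct, and it is organized differently from the paper's. The paper proves both identities by a direct coordinate computation: it writes out the coefficients of $L_n$ and $L_1$, applies $\overline{L}$ to the normalization identities $L_n\rho=1$ and $L_1\rho=0$, and recognizes the resulting sums as the Levi form. You instead derive the single invariant identity $\partial\rho([X,\overline{L}])=\Levi_{\rho}(X,L)-\overline{L}(X\rho)$ from Cartan's formula applied to $\beta=\partial\rho$ together with $d(\partial\rho)=-\partial\overline{\partial}\rho$, and then specialize $X=L_n$ and $X=L_1$; note that this is the same sign convention for $d$ on $1$-forms that the paper itself uses in Lemma \ref{lem In terms of the D'Angelo one form 2}, so everything is consistent. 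What your route buys is a unified treatment of both identities, an explicit identification of the correction term $\overline{L}(X\rho)$ as the only place the hypotheses ($L_n\rho\equiv 1$, respectively tangency of $L_1$ and $\overline{L}$) enter, and the well-definedness remark about independence of the $(1,0)$-extension of $L$, which the paper's computation silently assumes. What the paper's coordinate computation buys is that no extension or invariance discussion is needed and the Levi form appears immediately in its defining coordinate expression. Both arguments are at bottom the same product-rule manipulation, and your specializations are handled correctly: $\overline{L}(L_n\rho)=0$ because $L_n\rho\equiv 1$ near $\partial\O$, and $\overline{L}(L_1\rho)=0$ on $\partial\O$ because $L_1\rho$ vanishes identically on $\partial\O$ and $\overline{L}$ is tangential.
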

	\begin{proof}
		Let $L_1 = \sum_{j=1}^{n}a_j \frac{\partial}{\partial z_j}$ and $L = \sum_{k=1}^{n}b_k \frac{\partial}{\partial z_k}$.
		Since $L_n \rho = 1$, $L_1 \rho = 0$,
		$$ \sum^n_{j=1} \left( \frac{1}{\norm{\partial \rho}^2} \frac{\partial \rho}{\partial \overline{z}_j} \right) \left( \frac{\partial \rho}{\partial z_j} \right) = 1 ,$$
		$$ \sum^n_{j=1} a_j \frac{\partial \rho}{\partial z_j} = 0 $$
		imply
		$$ \sum^n_{j=1} \overline{L} \left( \frac{1}{\norm{\partial \rho}^2} \frac{\partial \rho}{\partial \overline{z}_j} \right) \left( \frac{\partial \rho}{\partial z_j} \right) 
		+ \sum^n_{j=1} \left( \frac{1}{\norm{\partial \rho}^2} \frac{\partial \rho}{\partial \overline{z}_j} \right) \overline{L} \left( \frac{\partial \rho}{\partial z_j} \right) = 0 ,$$
		$$ \sum_{j=1}^{n} \overline{L}(a_j) \frac{\partial \rho}{\partial z_j} + \sum_{j=1}^{n} a_j \overline{L} \left( \frac{\partial \rho}{\partial z_j} \right)= 0 ,$$
		respectively.
		Therefore,
		\begin{align*}
		\partial \rho \left( [L_n , \overline{L}] \right) 
		&= - \sum^n_{j=1} \overline{L} \left( \frac{1}{\norm{\partial \rho}^2} \frac{\partial \rho}{\partial \overline{z}_j} \right) \frac{\partial \rho}{\partial z_j}
		= \sum^n_{j=1} \frac{1}{\norm{\partial \rho}^2} \frac{\partial \rho}{\partial \overline{z}_j} \overline{L} \left( \frac{\partial \rho}{\partial z_j} \right) \\
		&= \frac{1}{\norm{\partial \rho}^2} \sum^n_{j=1,k=1} \frac{\partial^2 \rho}{\partial z_j \partial \overline{z}_k} \frac{\partial \rho}{\partial \overline{z}_j} \overline{b}_k
		= \Levi_{\rho}(L_n, L),
		\end{align*}
		and
		\begin{align*}
		\partial \rho \left( [L_1, \overline{L}] \right)
		&= - \sum^n_{j=1} \overline{L} (a_j) \frac{\partial \rho}{\partial z_j}
		= \sum^n_{j=1} a_j \overline{L} \left( \frac{\partial \rho}{\partial z_j} \right) \\
		&= \sum^n_{j=1,k=1} \frac{\partial^2 \rho}{\partial z_j \partial \overline{z}_k} a_j \overline{b}_k
		= \Levi_{\rho}(L_1, L).
		\end{align*}
	\end{proof}

	\begin{lem}[\cite{BoasStraube1993}] \label{lem alpha(L)}
		Let $\alpha$ be a D'Angelo 1-form on $\partial \O$. Then
		$$ \alpha(\overline{L}) = \partial \rho \left( [L_n , \overline{L}] \right) = 2\frac{\Levi_{\rho}(N,L)}{\norm{\nabla \rho}} $$
		on $\partial \O$ for a $(1,0)$ vector field $L$ on $\partial \O$.
	\end{lem}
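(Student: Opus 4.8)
The second equality requires no new work: by Lemma~\ref{lem family of good vector fields 1} one has $\partial\rho([L_n,\overline{L}]) = \Levi_\rho(L_n,L)$, and since $L_n = \frac{2}{\norm{\nabla\rho}}N$ and $\Levi_\rho$ is complex linear in its first argument, $\Levi_\rho(L_n,L) = \frac{2}{\norm{\nabla\rho}}\Levi_\rho(N,L)$. So the plan concentrates on the first equality $\alpha(\overline{L}) = \partial\rho([L_n,\overline{L}])$.

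I would begin from the definition $\alpha = -\Lie_T\eta$ and apply Cartan's magic formula on $\partial\O$, namely $\Lie_T\eta = d(\iota_T\eta) + \iota_T(d\eta)$; this is legitimate intrinsically on $\partial\O$ because $T$ is tangent to $\partial\O$. Since $\eta(T)\equiv 1$ on $\partial\O$, the first summand is $d(1) = 0$, so $\alpha(\overline{L}) = -(d\eta)(T,\overline{L})$. Expanding by the invariant formula $(d\eta)(T,\overline{L}) = T\bigl(\eta(\overline{L})\bigr) - \overline{L}\bigl(\eta(T)\bigr) - \eta([T,\overline{L}])$ and using that $\eta$ annihilates $T^{0,1}(\partial\O)$ (so $\eta(\overline{L})\equiv 0$) together with $\eta(T)\equiv 1$, the first two terms vanish and one is left with $\alpha(\overline{L}) = \eta([T,\overline{L}])$.

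It then remains to check $\eta([T,\overline{L}]) = \partial\rho([L_n,\overline{L}])$. Extend $L$ to a $(1,0)$ vector field on a neighborhood of $\partial\O$ with $L\rho\equiv 0$ (hence $\overline{L}\rho\equiv 0$), so that the ambient brackets below make sense, and write $[T,\overline{L}] = [L_n,\overline{L}] - [\overline{L}_n,\overline{L}]$ and $\eta = \tfrac{1}{2}(\partial\rho - \overline{\partial}\rho)$. Three elementary facts finish the computation: (i) $[\overline{L}_n,\overline{L}]$ is a $(0,1)$ field, so $\partial\rho([\overline{L}_n,\overline{L}]) = 0$; (ii) $d\rho([\overline{L}_n,\overline{L}]) = \overline{L}_n(\overline{L}\rho) - \overline{L}(\overline{L}_n\rho) = 0 - \overline{L}(1) = 0$ on $\partial\O$ because $\overline{L}_n\rho\equiv 1$, whence $\overline{\partial}\rho([\overline{L}_n,\overline{L}]) = 0$; and (iii) likewise $d\rho([L_n,\overline{L}]) = L_n(\overline{L}\rho) - \overline{L}(L_n\rho) = 0$, whence $\overline{\partial}\rho([L_n,\overline{L}]) = -\partial\rho([L_n,\overline{L}])$. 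Substituting these into $\eta([T,\overline{L}]) = \tfrac{1}{2}\partial\rho([L_n,\overline{L}]) - \tfrac{1}{2}\overline{\partial}\rho([L_n,\overline{L}]) - \tfrac{1}{2}\partial\rho([\overline{L}_n,\overline{L}]) + \tfrac{1}{2}\overline{\partial}\rho([\overline{L}_n,\overline{L}])$ collapses the right side to $\partial\rho([L_n,\overline{L}])$, and invoking Lemma~\ref{lem family of good vector fields 1} once more completes the chain of equalities.

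The argument is essentially a bookkeeping exercise, so there is no serious obstacle. The only points that require a little care are organizational: that $\Lie_T\eta$, $d\eta$, and the bracket $[T,\overline{L}]$ are all intrinsically defined on $\partial\O$ (which works precisely because the transverse parts of $L_n$ and $\overline{L}_n$ cancel in $T = L_n - \overline{L}_n$), and that one should fix the extension of $L$ with $L\rho\equiv 0$ near $\partial\O$ so that the manipulations of $\rho$-derivatives of the various brackets are valid. The single conceptual step is the Cartan-formula reduction, which turns the Lie derivative into the one term $\eta([T,\overline{L}])$.
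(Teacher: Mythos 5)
Your proof is correct and follows essentially the same route as the paper: both reduce $\alpha(\overline{L})$ to $\eta([T,\overline{L}])$ using $\eta(\overline{L})\equiv 0$ and $\eta(T)\equiv 1$ (your Cartan-formula detour is just the paper's direct Lie-derivative expansion in disguise), then kill the $\overline{\partial}\rho$ and $[\overline{L}_n,\overline{L}]$ contributions by tangentiality and type, and finish with Lemma~\ref{lem family of good vector fields 1}. The only cosmetic difference is that the paper disposes of $\overline{\partial}\rho$ via $d\rho([T,\overline{L}])=0$ in one step, whereas you verify $d\rho$ vanishes on each bracket separately.
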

	\begin{proof}
		Lemma \ref{lem family of good vector fields 1} implies that
		\begin{align*}
		\alpha(\overline{L}) 
		&= - (\Lie_T \eta)(\overline{L}) 
		= - T(\eta(\overline{L})) + \eta([ T,\overline{L} ]) 
		= \eta([ T,\overline{L} ]) 
		= \frac{1}{2}\left( \partial \rho - \overline{\partial} \rho \right) ( [T,\overline{L}] )	\\	
		&= \partial \rho \left( [L_n - \overline{L}_n, \overline{L}] \right)
		= \partial \rho \left( [L_n , \overline{L}] \right)
		= \Levi_{\rho}(L_n,L)
		= 2 \frac{\Levi_{\rho}(N,L)}{\norm{\nabla \rho}}.
		\end{align*}
		Here, we used $ \left( \partial \rho + \overline{\partial}\rho \right) \left( [T, \overline{L}] \right) = d\rho \left( [T, \overline{L}] \right) = 0$ (because $[T,\overline{L}]$ is tangential), and $\partial \rho \left([\overline{L}_n, \overline{L}] \right) = 0$ (because $[\overline{L}_n, \overline{L}]$ is of type $(0,1)$).
	\end{proof}


	\vspace{5mm}
	
	\section{\bf Characterization of two indices in terms of D'Angelo 1-form}

	The author (\cite{Yum2019}), exploiting the idea of \cite{Liu2017}, characterized the Steinness index by way of a differential inequality on the set of weakly pseudoconvex boundary points (in fact, on the set of infinite type boundary points in the sense of D'Angelo). Also, we may induce a similar description for the Diederich-Fornaess index by the same argument. The two descriptions are the following. \\
	
	Throughout this section, let $L$ be an arbitrary (1,0) vector field defined on a neighborhood $U \supset \partial \O$ with $L \rho = 0$ and define 
	$$ \Sigma_L := \{ p \in \partial \O : \Levi_{\rho}(L,L)(p) = 0 \} .$$
	
	\begin{thm} \label{thm DF Yum2019}
		$$ DF(\O) = \sup_{\rho} \left\{ 0 < \eta_1 < 1 :   
		\frac{1}{1-\eta_1} \frac{|\Levi_{\rho}(L,N)|^2}{\lVert \nabla \rho \rVert^2} + \frac{1}{2} \frac{N\Levi_{\rho}(L,L)}{\lVert \nabla \rho \rVert} \le 0 
		\text{ on } \Sigma_L \phantom{a} \forall L
		\right\},
		$$	
		where the supremum is taken over all smooth defining functions $\rho$.
	\end{thm}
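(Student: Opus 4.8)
The plan is to fix a smooth defining function $\rho$ and an exponent $\eta_1\in(0,1)$, decide when $-(-\rho)^{\eta_1}$ is strictly plurisubharmonic, and reduce everything to the behavior of its complex Hessian near $\partial\O$. A direct computation gives
$$ \partial\overline{\partial}\bigl(-(-\rho)^{\eta_1}\bigr)=\eta_1(-\rho)^{\eta_1-2}\Bigl[(1-\eta_1)\,\partial\rho\wedge\overline{\partial}\rho+(-\rho)\,\partial\overline{\partial}\rho\Bigr], $$
so that, for a $(1,0)$ vector $X$ at $z\in\O$ and
$$ Q_z(X):=(1-\eta_1)\,\lvert\partial\rho(X)\rvert^2+(-\rho(z))\,\Levi_{\rho}(X,X), $$
one has $\partial\overline{\partial}(-(-\rho)^{\eta_1})(X,\overline X)=\eta_1(-\rho(z))^{\eta_1-2}Q_z(X)$. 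Hence $-(-\rho)^{\eta_1}$ is strictly plurisubharmonic near $\partial\O$ iff $Q_z(X)>0$ for all $X\neq0$ and all $z$ in a one-sided neighborhood of $\partial\O$; since $\O$ is bounded and pseudoconvex, this local condition is equivalent to $\eta_{\rho'}\ge\eta_1$ for a suitable defining function $\rho'$ (the standard globalization, cf.\ \cite{Yum2019}), so the whole statement reduces to the near-boundary behavior of $Q_z$.

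Next I write an arbitrary $(1,0)$ vector as $X=a\,L_n+L$ with $L\rho\equiv0$; then $\partial\rho(X)=a$ and, since $\Levi_{\rho}$ is Hermitian, $\Levi_{\rho}(X,X)=\lvert a\rvert^2\Levi_{\rho}(L_n,L_n)+2\re\bigl(a\,\Levi_{\rho}(L_n,L)\bigr)+\Levi_{\rho}(L,L)$. If the nearest boundary point of $z$ is not in $\Sigma_L$, then $\Levi_{\rho}(L,L)\gtrsim\norm{L}^2$ there, and combining this with $1-\eta_1>0$ and a Cauchy--Schwarz absorption of the cross term gives $Q_z(X)>0$ for $-\rho$ small. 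So the only obstruction comes from $p\in\Sigma_L$ with $z$ near $p$ from inside; for such $z$ set $P(z):=(1-\eta_1)+(-\rho)\Levi_{\rho}(L_n,L_n)>0$ and complete the square in $a$ to obtain
$$ \min_{a\in\CC}Q_z(a\,L_n+L)=(-\rho)\,\Levi_{\rho}(L,L)(z)-\frac{(-\rho)^2\,\lvert\Levi_{\rho}(L_n,L)(z)\rvert^2}{P(z)}. $$

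Now I let $z\to p$ along the inner normal $-\re L_n$, so that $-\rho(z)$ is, to first order, the flow parameter. Since $\O$ is pseudoconvex, $\Levi_{\rho}(L,L)\ge0$ on $\partial\O$, so its zero $p$ is a minimum of $\Levi_{\rho}(L,L)|_{\partial\O}$ and every tangential derivative of $\Levi_{\rho}(L,L)$ vanishes at $p$; applying this to the tangential field $T_{\rho}=L_n-\overline{L}_n$ and using that $\Levi_{\rho}(L,L)$ is real shows $N\Levi_{\rho}(L,L)(p)\in\RR$ and $(\re L_n)\Levi_{\rho}(L,L)(p)=\tfrac{2}{\norm{\nabla\rho}}N\Levi_{\rho}(L,L)(p)$. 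A Taylor expansion along the normal then gives $\Levi_{\rho}(L,L)(z)=-(-\rho)\tfrac{2}{\norm{\nabla\rho}}N\Levi_{\rho}(L,L)(p)+O(\rho^2)$, while Lemma \ref{lem alpha(L)} gives $\lvert\Levi_{\rho}(L_n,L)\rvert^2=4\lvert\Levi_{\rho}(L,N)\rvert^2/\norm{\nabla\rho}^2$ and $P(z)\to1-\eta_1$. Substituting and dividing by $(-\rho)^2$, the leading coefficient of $\min_a Q_z$ as $z\to p$ equals
$$ -\frac{2\,N\Levi_{\rho}(L,L)(p)}{\norm{\nabla\rho}}-\frac{4\,\lvert\Levi_{\rho}(L,N)(p)\rvert^2}{(1-\eta_1)\norm{\nabla\rho}^2}, $$
whose nonnegativity is exactly $\dfrac{1}{1-\eta_1}\dfrac{\lvert\Levi_{\rho}(L,N)\rvert^2}{\norm{\nabla\rho}^2}+\dfrac12\dfrac{N\Levi_{\rho}(L,L)}{\norm{\nabla\rho}}\le0$ at $p$. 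Thus, if $\eta_1<\eta_\rho$ then $Q_z>0$ near $\partial\O$, hence this leading coefficient is $\ge0$ for every admissible $L$ and every $p\in\Sigma_L$; taking the supremum over $\rho$ yields the inequality $\le$ in the theorem. For the converse one runs the estimates backwards: whenever the displayed inequality holds strictly for all $L$ (which is the case for every $\eta_1$ below the supremum, after a routine perturbation of $\rho$ upgrading non-strict to strict as in \cite{Yum2019}), compactness of $\partial\O$ and uniform continuity force $Q_z>0$ on a one-sided neighborhood of $\partial\O$, whence $\eta_\rho\ge\eta_1$ and, after taking the supremum over $\rho$, the inequality $\ge$.

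The delicate point is the uniformity behind the two near-boundary estimates: one must control $Q_z(aL_n+L)$ simultaneously as the scalar $a$ ranges over all scales, as $-\rho\to0$, and as $L$ ranges over the non-compact family of $(1,0)$ fields with $L\rho\equiv0$ whose value approaches the Levi-null space at points of $\Sigma_L$. Here Lemma \ref{orthgonal} is what makes the borderline directions tractable (it forces $\Levi_{\rho}(L,\cdot)(p)=0$ as soon as $\Levi_{\rho}(L,L)(p)=0$), and the transition regions between "$p\in\Sigma_L$'' and "$p\notin\Sigma_L$'' must be handled by interpolating the two estimates; the accompanying passage from "strictly plurisubharmonic near $\partial\O$'' to "on all of $\O$'' is standard but should be stated with care.
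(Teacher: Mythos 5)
Your strategy is the right one and is, in substance, the argument this paper points to: the theorem is stated here without proof and attributed to \cite{Yum2019} and \cite{Liu2017}, and those sources proceed exactly as you do --- compute the complex Hessian of $-(-\rho)^{\eta_1}$, reduce strict plurisubharmonicity to positivity of $Q_z(X)=(1-\eta_1)\lvert\partial\rho(X)\rvert^2+(-\rho)\Levi_{\rho}(X,X)$, split $X=aL_n+L$ with $L\rho=0$, complete the square in $a$, and Taylor-expand along the inward normal from a point of $\Sigma_L$. Your leading-order coefficient is correct (the normalizations $L_n=\tfrac{2}{\lVert\nabla\rho\rVert}N$, the vanishing of tangential derivatives of $\Levi_{\rho}(L,L)$ at a boundary minimum, and the resulting identity $(\re L_n)\Levi_{\rho}(L,L)(p)=\tfrac{2}{\lVert\nabla\rho\rVert}N\Levi_{\rho}(L,L)(p)$ all check out), so the necessity half --- $\eta_1<\eta_\rho$ forces the displayed inequality at every $p\in\Sigma_L$ for every $L$ --- is essentially complete, modulo the globalization from ``strictly plurisubharmonic near $\partial\O$'' to ``on $\O$'' and the degenerate case $L(p)=0$.

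The gap is the sufficiency half, which is where all of the work in \cite{Liu2017} and \cite{Yum2019} actually lives and which you flag but do not close. Three concrete problems. First, the hypothesis controls only the coefficient of $(-\rho)^2$ in $\min_a Q_z$, and only non-strictly; at points where $\Levi_{\rho}(L,N)(p)=0$ and $N\Levi_{\rho}(L,L)(p)=0$ the inequality reads $0\le 0$ for every $\eta_1$, the first-order expansion says nothing about the sign of $\min_a Q_z$, and shrinking $\eta_1$ does not restore strictness --- so the ``routine perturbation of $\rho$'' is doing real work (it is precisely Lemma 2.2 of \cite{Yum2019}, re-derived in a remark of the present paper) and must be invoked with its exact statement. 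Second, the interpolation between the regime $\Levi_{\rho}(L,L)(p)\ge\delta$, where $(-\rho)\Levi_{\rho}(L,L)$ absorbs the cross term, and the regime $p\in\Sigma_L$, where the Taylor expansion applies, has to be made uniform over the non-compact family of admissible $L$ and over points where $\Levi_{\rho}(L,L)$ is small but positive; the two estimates degenerate at different rates in $-\rho$ and $\delta$, and a compactness argument on the unit sphere bundle of $T^{1,0}(\partial\O)$ with an explicit choice of $\delta$ is required. Third, $N\Levi_{\rho}(L,L)$ depends on the extension of $L$ off the boundary, so when passing from a pointwise vector $X$ at $z\in\O$ to a field $L$ to which the hypothesis applies, you must fix a specific extension and check that the expansion is insensitive to that choice. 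Since the theorem is imported from \cite{Yum2019} rather than proved here, deferring these steps to that reference is legitimate; as a self-contained proof, they are missing.
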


	\begin{thm} [Yum \cite{Yum2019}] \label{thm S Yum2019}
		$$ S(\O) = \inf_{\rho} \left\{ \eta_2 > 1 :   
		\frac{1}{\eta_2 - 1} \frac{|\Levi_{\rho}(L,N)|^2}{\lVert \nabla \rho \rVert^2} - \frac{1}{2} \frac{N\Levi_{\rho}(L,L)}{\lVert \nabla \rho \rVert} \le 0 
		\text{ on } \Sigma_L \phantom{a} \forall L
		\right\},
		$$	
		where the infimum is taken over all smooth defining functions $\rho$.
	\end{thm}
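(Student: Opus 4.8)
The plan is to analyze, for each fixed defining function $\rho$, exactly which exponents $\eta_2 > 1$ make $\rho^{\eta_2}$ strictly plurisubharmonic on $\overline{\O}^{\complement} \cap U$ for some neighborhood $U$ of $\partial\O$, and then to take the infimum over $\rho$. First I would compute the complex Hessian of $u = \rho^{\eta_2}$ directly. One finds $\partial_{z_j}\partial_{\bar z_k} u = \eta_2 \rho^{\eta_2 - 2}\big[(\eta_2 - 1)\,\partial_{z_j}\rho\,\partial_{\bar z_k}\rho + \rho\,\partial_{z_j}\partial_{\bar z_k}\rho\big]$, so that on $\{\rho > 0\}$ strict plurisubharmonicity at a point is equivalent to the pointwise inequality
$$(\eta_2 - 1)\,|\partial\rho(X)|^2 + \rho\,\Levi_{\rho}(X,X) > 0$$
for every nonzero $(1,0)$ vector $X$. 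This turns the whole question into understanding, as $\rho \to 0^+$, when this quadratic form in $X$ stays positive.

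Next I would localize near $\Sigma$. Away from $\Sigma$ the Levi-form is strictly positive on $T^{1,0}(\partial\O)$, so by continuity there is a neighborhood of each strongly pseudoconvex point on which $\Levi_{\rho}$ is bounded below in tangential directions while the term $(\eta_2 - 1)|\partial\rho(X)|^2$ controls the normal directions; hence for small $\rho$ the inequality holds there for every $\eta_2 > 1$, and $\widetilde\eta_\rho$ is governed entirely by the behavior near $\Sigma$. There I would decompose $X = a L_n + L$ with $L\rho = 0$, so that $\partial\rho(X) = a$ and $\Levi_{\rho}(X,X) = |a|^2 \Levi_{\rho}(L_n,L_n) + 2\Re\!\big(a\,\Levi_{\rho}(L_n,L)\big) + \Levi_{\rho}(L,L)$. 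For fixed $L$ this is a positive definite quadratic in $a$; completing the square, its minimum over $a$ equals
$$\rho\,\Levi_{\rho}(L,L) - \frac{\rho^2\,|\Levi_{\rho}(L_n,L)|^2}{(\eta_2 - 1) + \rho\,\Levi_{\rho}(L_n,L_n)}.$$
Dividing by $\rho$ and letting $\rho \to 0^+$ leaves $\Levi_{\rho}(L,L)/\rho - |\Levi_{\rho}(L_n,L)|^2/(\eta_2 - 1)$. At a point $p \in \Sigma_L$, pseudoconvexity forces $\Levi_{\rho}(L,L) \ge 0$ on $\partial\O$ with a zero at $p$, so $p$ is a minimum of $\Levi_{\rho}(L,L)|_{\partial\O}$; Taylor expanding along the normal field $\Re L_n = \tfrac{2}{\norm{\nabla\rho}}\Re N$, which satisfies $d\rho(\Re L_n) = 1$, gives $\Levi_{\rho}(L,L)(z) = \rho\,\tfrac{2}{\norm{\nabla\rho}}\Re\!\big(N\Levi_{\rho}(L,L)\big)(p) + O(\rho^2)$, and since $N\rho$ is real the field $\Im N$ is tangent to $\partial\O$, so the tangential derivative $(\Im N)\Levi_{\rho}(L,L)(p)$ vanishes at the minimum $p$ and $N\Levi_{\rho}(L,L)$ is real there. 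Combining this with $\Levi_{\rho}(L_n,L) = \tfrac{2}{\norm{\nabla\rho}}\Levi_{\rho}(N,L)$ converts the limiting condition, up to an overall positive factor, into precisely
$$\frac{1}{\eta_2 - 1}\frac{|\Levi_{\rho}(L,N)|^2}{\norm{\nabla\rho}^2} - \frac{1}{2}\frac{N\Levi_{\rho}(L,L)}{\norm{\nabla\rho}} \le 0.$$

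It then remains to reconcile the strict (open-set) condition defining $\widetilde\eta_\rho$ with the non-strict (limiting) inequality in the statement. One direction is straightforward: if $\rho^{\eta_2}$ is strictly plurisubharmonic on $\overline{\O}^{\complement}\cap U$, then evaluating the reduced quantity along normal approaches to points of $\Sigma_L$ and letting $\rho\to 0^+$ forces the inequality, so $\{\eta_2 : \rho^{\eta_2}\text{ strictly psh near }\partial\O\} \subseteq \{\eta_2 : \text{the inequality holds for }\rho\}$, which already yields $\inf_\rho\widetilde\eta_\rho \ge \inf_\rho\inf\{\,\cdots\}$. For the reverse inequality I would show that whenever the differential inequality holds for $\rho$ at $\eta_2$, one can produce---after raising the exponent slightly to $\eta_2' > \eta_2$ and, if necessary, replacing $\rho$ by $\rho\, e^{\psi}$---a function strictly plurisubharmonic on $\overline{\O}^{\complement}\cap U$; since raising $\eta_2$ strictly shrinks the obstruction $\tfrac{1}{\eta_2-1}|\Levi_{\rho}(L,N)|^2$ at the non-degenerate null directions, a continuity and compactness argument closes the gap. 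Lemma \ref{orthgonal} enters to restrict the test directions to genuine null directions, since at a non-null tangential $L$ the positive term $\Levi_{\rho}(L,L)$ already dominates and the cross term cannot spoil positivity.

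The hard part will be this sufficiency step: controlling the $O(\rho^2)$ remainders uniformly as $z\to\Sigma$ and turning the pointwise limiting inequality on $\Sigma$ into strict plurisubharmonicity on a full open neighborhood. The genuinely delicate situation is the doubly degenerate one, where $\Levi_{\rho}(L,N)$ and $N\Levi_{\rho}(L,L)$ vanish simultaneously at some $p\in\Sigma_L$: there the leading-order contribution cancels for every exponent, so strictness cannot be read off from the third-order data alone, and it is precisely here that the freedom to choose the defining function---the outer infimum over $\rho$---must be used to match $\inf_\rho\widetilde\eta_\rho$ with $\inf_\rho\inf\{\,\cdots\}$. This boundary analysis, adapted from the method of \cite{Liu2017}, is the technical core, and it makes $C^3$-smoothness of $\partial\O$ the natural hypothesis, since the characterization already involves the third-order quantity $N\Levi_{\rho}(L,L)$.
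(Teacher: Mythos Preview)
This theorem is not proved in the present paper: it is quoted from \cite{Yum2019} (note the attribution ``[Yum \cite{Yum2019}]'' in the theorem heading), and the paper uses it as input for Theorem~\ref{thm main DF,S D'Angelo 1-form}. The only commentary the paper offers is the remark following Theorem~\ref{thm S Yum2019}, which says that the analogous Diederich--Forn{\ae}ss statement (Theorem~\ref{thm DF Yum2019}) follows by applying Liu's argument from \cite{Liu2017} to a general defining function $\rho$ rather than one of the form $re^{\psi}$; the Steinness version in \cite{Yum2019} is obtained by the same method on the outside of the domain.

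Your sketch is in the spirit of that Liu-type argument: compute the complex Hessian of $\rho^{\eta_2}$, decompose test vectors into normal and tangential parts, complete the square in the normal coefficient, and Taylor-expand the Levi form along the outward normal to extract the third-order condition on $\Sigma_L$. The necessity direction you outline is essentially complete. For sufficiency you correctly flag the real difficulty---turning the non-strict limiting inequality on $\Sigma$ into strict plurisubharmonicity on an open one-sided neighborhood---but your treatment there is only a strategy, not a proof: ``a continuity and compactness argument closes the gap'' hides the genuine work, namely controlling the error terms uniformly over all null directions at all points of $\Sigma$ simultaneously, and handling the doubly degenerate case you mention. That step requires a careful patching/perturbation construction (this is the substance of \cite{Liu2017} and \cite{Yum2019}), and you would need to supply it before the argument is complete. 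So: right approach, matching what the cited references do, but the sufficiency half is still a proof outline rather than a proof.
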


	\begin{rmk}
		We give here an explaination about the difference of Theorem \ref{thm DF Yum2019} and the formula of Liu (Thoerem 2.9 in \cite{Liu2017}). Liu dealt with a defining function of the form $r e^{\psi}$, where $r$ is a defining function and $\psi$ is a smooth function near the boundary. If we apply the same argument as Liu for a defining function $\rho$, then one can conclude Theorem \ref{thm DF Yum2019}. If we replace $\rho$ by $r e^{\psi}$ in Theorem \ref{thm DF Yum2019} (using Lemma 2.2 in \cite{Yum2019}), one gets a formula which is equivalent to the formula of Liu.

	\end{rmk}
	
	The two theorems above imply that the Diederich-Fornaess index and Steinness index are completely determined by the two values $\frac{|\Levi_{\rho}(L,N)|^2}{\lVert \nabla \rho \rVert^2}$ and $\frac{N\Levi_{\rho}(L,L)}{\lVert \nabla \rho \rVert}$ on the set of weakly pseudoconvex boundary points. From Lemma \ref{lem alpha(L)}, we know that 
	$$ \alpha(\overline{L}) = 2\frac{\Levi_{\rho}(N,L)}{\norm{\nabla \rho}} $$
	for a D'Angelo 1-form $\alpha$.
	We now describe the second value in terms of $\alpha$.

	\begin{lem} \label{lem In terms of the D'Angelo one form 2}
		Let $\alpha$ be a smooth real 1-form on $\CC^n$. Let $X \in T^{1,0}(\CC^n)$ and $\overline{Y} \in T^{0,1}(\CC^n)$.
		Then the following equations hold.
		\begin{align*}
		(d \alpha)(X, \overline{Y}) &= X( \alpha(\overline{Y}) ) - \overline{Y}( \alpha(X) ) - \alpha(\nabla_X \overline{Y}) + \alpha(\nabla_{\overline{Y}} X), \\
		(d^c \alpha)(X, \overline{Y}) &= i \left[ -X( \alpha(\overline{Y}) ) - \overline{Y}( \alpha(X) ) + \alpha(\nabla_X \overline{Y}) + \alpha(\nabla_{\overline{Y}} X) \right].
		\end{align*}
	\end{lem}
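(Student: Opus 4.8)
The plan is to reduce both identities to Cartan's formula for the exterior derivative together with the torsion-freeness of $\nabla$, using a bidegree decomposition of $\alpha$ to handle $d^c$.

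For the first identity I would start from the general formula $(d\alpha)(X,Y)=X(\alpha(Y))-Y(\alpha(X))-\alpha([X,Y])$, valid for any vector fields and any $1$-form, set $Y=\overline{Y}$, and use that the Levi-Civita connection is torsion-free, so $[X,\overline{Y}]=\nabla_X\overline{Y}-\nabla_{\overline{Y}}X$. This already gives
\[
(d\alpha)(X,\overline{Y})=X(\alpha(\overline{Y}))-\overline{Y}(\alpha(X))-\alpha(\nabla_X\overline{Y})+\alpha(\nabla_{\overline{Y}}X),
\]
with nothing more to check.

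For the second identity I would decompose $\alpha=\beta+\gamma$ into its $(1,0)$-part $\beta=\pi_{1,0}\alpha$ and its $(0,1)$-part $\gamma$. Since $d=\partial+\overline{\partial}$, and $\partial\beta$ has bidegree $(2,0)$ while $\overline{\partial}\gamma$ has bidegree $(0,2)$, pairing with $X$ of type $(1,0)$ and $\overline{Y}$ of type $(0,1)$ annihilates these two pieces, so $(d\alpha)(X,\overline{Y})=(\overline{\partial}\beta)(X,\overline{Y})+(\partial\gamma)(X,\overline{Y})$ and, because $d^c=i(\overline{\partial}-\partial)$, also $(d^c\alpha)(X,\overline{Y})=i\bigl[(\overline{\partial}\beta)(X,\overline{Y})-(\partial\gamma)(X,\overline{Y})\bigr]$. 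It then remains to compute $A:=(\overline{\partial}\beta)(X,\overline{Y})=(d\beta)(X,\overline{Y})$ and $B:=(\partial\gamma)(X,\overline{Y})=(d\gamma)(X,\overline{Y})$ from Cartan's formula. Here one uses $\beta(\overline{Y})=0$ and $\gamma(X)=0$, together with the fact that on $\CC^n$ the connection $\nabla$ is coordinate-wise differentiation and hence preserves the $(1,0)/(0,1)$ splitting of vector fields: $\nabla_X\overline{Y}$ is of type $(0,1)$ and $\nabla_{\overline{Y}}X$ of type $(1,0)$, so $\beta(\nabla_X\overline{Y})=0=\gamma(\nabla_{\overline{Y}}X)$ while $\beta(\nabla_{\overline{Y}}X)=\alpha(\nabla_{\overline{Y}}X)$ and $\gamma(\nabla_X\overline{Y})=\alpha(\nabla_X\overline{Y})$. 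This yields $A=-\overline{Y}(\alpha(X))+\alpha(\nabla_{\overline{Y}}X)$ and $B=X(\alpha(\overline{Y}))-\alpha(\nabla_X\overline{Y})$; then $A+B$ reproduces the first identity as a consistency check, and $i(A-B)$, after collecting terms, is precisely the asserted expression for $(d^c\alpha)(X,\overline{Y})$.

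I do not expect a genuine obstacle here; the computation is bookkeeping. The only two points deserving a moment's attention are the bidegree argument that lets one discard the $(2,0)$ and $(0,2)$ components of $d\alpha$ and $d^c\alpha$ when evaluating on the pair $(X,\overline{Y})$, and the (elementary, since $\nabla_V W$ in Euclidean $\CC^n$ is just $V$ applied to the coefficients of $W$) observation that $\nabla$ respects the type decomposition of vector fields, which is what makes the connection terms land exactly where the formulas require.
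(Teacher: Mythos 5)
Your proof is correct. For the first identity you do exactly what the paper does: Cartan's formula plus torsion-freeness of the flat connection, $[X,\overline{Y}]=\nabla_X\overline{Y}-\nabla_{\overline{Y}}X$. For the second identity, however, you take a genuinely different route. The paper writes $\alpha=\sum_l(a_l\,dz_l+\overline{a}_l\,d\overline{z}_l)$ and grinds out $(d^c\alpha)(X,\overline{Y})$, $X(\alpha(\overline{Y}))$, $\overline{Y}(\alpha(X))$, $\alpha(\nabla_X\overline{Y})$, $\alpha(\nabla_{\overline{Y}}X)$ in coordinates and matches terms; you instead split $\alpha=\beta+\gamma$ by bidegree, discard the $(2,0)$ and $(0,2)$ components of $d\alpha$ and $d^c\alpha$ when pairing against $(X,\overline{Y})$, and apply Cartan's formula to $\beta$ and $\gamma$ separately, using that the trivial connection on $\CC^n$ preserves the type decomposition so that each connection term lands on the component of $\alpha$ that sees it. I checked the resulting values $A=-\overline{Y}(\alpha(X))+\alpha(\nabla_{\overline{Y}}X)$ and $B=X(\alpha(\overline{Y}))-\alpha(\nabla_X\overline{Y})$; then $A+B$ recovers the first identity and $i(A-B)$ is the claimed expression for $d^c\alpha$, so everything closes up. Your argument is more structural and essentially coordinate-free (the only coordinate input is that $\nabla$ differentiates coefficients and hence respects types), it produces both identities from one computation, and it incidentally shows that the reality of $\alpha$ is never used; the paper's computation is more pedestrian but entirely self-contained and leaves no conventions implicit. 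Both are valid; just make sure your sign convention for evaluating a $2$-form on a pair of vectors matches the one implicit in the Cartan formula you quote (no factor of $\tfrac{1}{2}$), which is also the paper's convention.
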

	
	\begin{proof}
		The first equation follows by the definition of exterior derivative : 
		\begin{align*}
		(d \alpha)(X, \overline{Y}) &= X( \alpha(\overline{Y}) ) - \overline{Y}( \alpha(X) ) - \alpha([X,\overline{Y}]) \\
		&= X( \alpha(\overline{Y}) ) - \overline{Y}( \alpha(X) ) - \alpha(\nabla_X \overline{Y}) + \alpha(\nabla_{\overline{Y}} X).
		\end{align*}
		Since $\alpha$ is real, we may write 
		$$ \alpha = \sum_{l=1}^{n} ( a_l dz_l + \overline{a}_l d\overline{z}_l ), \phantom{aaa} 
		X = \sum_{j=1}^{n} x_j \frac{\partial}{\partial z_j}, \phantom{aaa} 
		Y = \sum_{k=1}^{n} \overline{y}_k \frac{\partial}{\partial \overline{z}_k} $$ 
		for some $a_j \in \CC$. Then
		
		\begin{align*}
		& d^c \alpha = i(\overline{\partial} - \partial) \alpha \\
		= & i \sum_{l,m=1}^{n} \left[ 
		\left(  \frac{\partial a_l}{\partial \overline{z}_m} d\overline{z}_m \wedge dz_l  + \frac{\partial \overline{a}_l}{\partial \overline{z}_m} d\overline{z}_m \wedge d\overline{z}_l  \right) -  
		\left(  \frac{\partial a_l}{\partial z_m} dz_m \wedge dz_l  + \frac{\partial \overline{a}_l}{\partial z_m} dz_m \wedge d\overline{z}_l  \right)          \right],
		\end{align*}
		\begin{align*}
		(d^c \alpha)(X, \overline{Y}) = i \sum_{j,k=1}^{n} \left[ 
		- \frac{\partial a_j}{\partial \overline{z}_k} x_j \overline{y}_k 
		- \frac{\partial \overline{a}_k}{\partial z_j} x_j \overline{y}_k  \right],
		\end{align*}
		\begin{align*}
		X(\alpha(\overline{Y})) &= X \left( \sum_{l=1}^{n} \overline{a}_l \overline{y}_l \right) = \sum_{j,k=1}^{n} x_j\frac{\partial \overline{a}_k}{\partial z_j} \overline{y}_k + \sum_{j,k=1}^{n} x_j \overline{a}_k \frac{\partial \overline{y}_k}{\partial z_j}, \\
		\overline{Y}(\alpha(X)) &= \overline{Y} \left( \sum_{l=1}^{n}  a_l x_l   \right) = \sum_{j,k=1}^{n}  \overline{y}_k \frac{\partial a_j}{\partial \overline{z}_k} x_j + \sum_{j,k=1}^{n} \overline{y}_k a_j \frac{\partial x_j}{\partial \overline{z}_k},
		\end{align*}
		\begin{align*}
		\nabla_X \overline{Y} = \sum_{j,k=1}^{n} x_j \frac{\partial \overline{y}_k}{\partial z_j} \frac{\partial}{\partial \overline{z}_k}, \phantom{aaa}
		\nabla_{\overline{Y}} X = \sum_{j,k=1}^{n} \overline{y}_k \frac{\partial x_j}{\partial \overline{z}_k} \frac{\partial}{\partial z_j},
		\end{align*}
		\begin{align*}
		\alpha( \nabla_X \overline{Y} ) = \sum_{j,k=1}^{n} \overline{a}_k x_j \frac{\partial \overline{y}_k}{\partial z_j}         , \phantom{aaa}
		\alpha( \nabla_{\overline{Y}} X ) = \sum_{j,k=1}^{n} a_j \overline{y}_k \frac{\partial x_j}{\partial \overline{z}_k},
		\end{align*}
		Therefore,
		\begin{align*}
		(d^c \alpha)(X, \overline{Y}) = i \left[ -X( \alpha(\overline{Y}) ) - \overline{Y}( \alpha(X) ) + \alpha(\nabla_X \overline{Y}) + \alpha(\nabla_{\overline{Y}} X)         \right].
		\end{align*}
	\end{proof}

	\begin{lem} \label{lem In terms of the D'Angelo one form 1}
		\begin{align} \label{4.0.1}
		\frac{|\Levi_{\rho}(L,N)|^2}{\norm{\nabla \rho}^2} + \frac{1}{2} \frac{N \Levi_{\rho}(L,L)}{\norm{\nabla \rho}} = \frac{1}{4} g(\nabla_L \nabla_{L_n} \nabla \rho, L)
		\end{align}
		holds at $p \in \Sigma_L$ for all $L$.
	\end{lem}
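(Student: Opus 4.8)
The plan is to rewrite both sides of \eqref{4.0.1} in terms of $L_n=L_{n,\rho}$ and then reduce to a pointwise computation with the flat Levi--Civita connection $\nabla$ at $p\in\Sigma_L$, using Lemma \ref{orthgonal} to annihilate every Levi-form term pointing in a tangential direction. Since $N=\tfrac{\norm{\nabla\rho}}{2}L_n$ as vector fields, and since $g$ is Hermitian while $\norm{\nabla\rho}/2$ is real, one has $\tfrac12\tfrac{N\Levi_\rho(L,L)}{\norm{\nabla\rho}}=\tfrac14 L_n\Levi_\rho(L,L)$ and $\tfrac{|\Levi_\rho(L,N)|^2}{\norm{\nabla\rho}^2}=\tfrac14|\Levi_\rho(L,L_n)|^2$; hence the left side of \eqref{4.0.1} equals $\tfrac14\big(|\Levi_\rho(L,L_n)|^2+L_n\Levi_\rho(L,L)\big)$, and it suffices to prove
\[
g(\nabla_L\nabla_{L_n}\nabla\rho,L)(p)=|\Levi_\rho(L,L_n)|^2(p)+L_n\Levi_\rho(L,L)(p),\qquad p\in\Sigma_L .
\]

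First I would use the flatness identity $\nabla_X\nabla_Y-\nabla_Y\nabla_X-\nabla_{[X,Y]}=0$ applied to the vector field $\nabla\rho$ with $X=L$, $Y=L_n$, which gives $\nabla_L\nabla_{L_n}\nabla\rho=\nabla_{L_n}\nabla_L\nabla\rho-\nabla_{[L_n,L]}\nabla\rho$, so that $g(\nabla_L\nabla_{L_n}\nabla\rho,L)=g(\nabla_{L_n}\nabla_L\nabla\rho,L)-\Levi_\rho([L_n,L],L)$. The field $[L_n,L]$ is again of type $(1,0)$, and since $L\rho\equiv0$ and $L_n\rho\equiv1$ near $\partial\O$ we get $[L_n,L]\rho=L_n(L\rho)-L(L_n\rho)=0$; thus $[L_n,L](p)\in\ker(\partial\rho)_p=T^{1,0}_p(\partial\O)$. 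By the Cauchy--Schwarz inequality for the Levi form of a pseudoconvex domain (as in Lemma \ref{orthgonal}), $\Levi_\rho(L,L)(p)=0$ forces $\Levi_\rho([L_n,L],L)(p)=0$, so the bracket term drops out.

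It then remains to evaluate $g(\nabla_{L_n}\nabla_L\nabla\rho,L)$ at $p$. By metric compatibility $Zg(X,Y)=g(\nabla_ZX,Y)+g(X,\nabla_{\overline Z}Y)$ with $Z=L_n$, $X=\nabla_L\nabla\rho$, $Y=L$, one obtains $L_n\Levi_\rho(L,L)=g(\nabla_{L_n}\nabla_L\nabla\rho,L)+g(\nabla_L\nabla\rho,\nabla_{\overline{L_n}}L)$, so the real task is to compute $g(\nabla_L\nabla\rho,\nabla_{\overline{L_n}}L)(p)$. Since $\nabla_{\overline{L_n}}L$ is of type $(1,0)$, at $p$ it splits as $\nabla_{\overline{L_n}}L=W+c\,L_n$ with $(\partial\rho)(W)=0$ (so $W(p)\in T^{1,0}_p(\partial\O)$) and $c=(\partial\rho)(\nabla_{\overline{L_n}}L)$. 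Because $(\partial\rho)(L)=L\rho\equiv0$, the connection rule $(\nabla_{\overline{L_n}}\partial\rho)(L)=\overline{L_n}\big((\partial\rho)(L)\big)-(\partial\rho)(\nabla_{\overline{L_n}}L)$ gives $c=-(\nabla_{\overline{L_n}}\partial\rho)(L)$, and a one-line coordinate check — differentiating $\partial\rho$ along the anti-holomorphic direction $\overline{L_n}$ produces exactly the complex Hessian $\partial\overline\partial\rho$ — shows $(\nabla_{\overline{L_n}}\partial\rho)(L)=\Levi_\rho(L,L_n)$, hence $c=-\Levi_\rho(L,L_n)$. Applying Lemma \ref{orthgonal} once more to the tangential part $W$ (so that $\Levi_\rho(L,W)(p)=0$), we get $g(\nabla_L\nabla\rho,\nabla_{\overline{L_n}}L)(p)=\overline c\,\Levi_\rho(L,L_n)(p)=-|\Levi_\rho(L,L_n)|^2(p)$, and therefore $g(\nabla_{L_n}\nabla_L\nabla\rho,L)(p)=L_n\Levi_\rho(L,L)(p)+|\Levi_\rho(L,L_n)|^2(p)$. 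Together with the previous paragraph this is the displayed identity, and multiplying by $\tfrac14$ yields \eqref{4.0.1}.

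The step I expect to be the main obstacle is the bookkeeping in the last paragraph: correctly isolating the $L_n$-component $c$ of $\nabla_{\overline{L_n}}L$ and tracking the conjugations consistently — both $g$ and $\Levi_\rho$ are conjugate-linear in the second argument, and it is precisely this that turns $\overline c\,\Levi_\rho(L,L_n)$ into $-|\Levi_\rho(L,L_n)|^2$. One also has to keep in mind that the identity is genuinely pointwise on $\Sigma_L$: away from $\Sigma_L$ the tangential Levi-form terms killed by Lemma \ref{orthgonal} no longer vanish, and the same manipulations would leave additional error terms.
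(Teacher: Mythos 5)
Your proof is correct and follows essentially the same route as the paper's: commute $\nabla_{L_n}\nabla_L$ and $\nabla_L\nabla_{L_n}$ via flatness, kill the bracket term because $[L_n,L]\rho=0$, use metric compatibility to isolate $g(\nabla_L\nabla\rho,\nabla_{\overline{L}_n}L)$, and evaluate it by splitting $\nabla_{\overline{L}_n}L$ into a tangential part (annihilated by Lemma \ref{orthgonal}) and a normal part whose coefficient is $-\Levi_{\rho}(L,L_n)$. The only cosmetic difference is that you work with $L_n$ and the dual pairing with $\partial\rho$ instead of the paper's orthonormal-frame decomposition through $N$, which also lets you skip the paper's separate treatment of the case $L(p)=0$.
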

	
	\begin{proof}
		Fix $p \in \Sigma_L$. Then 
		\begin{align*}
		\frac{1}{2} \frac{N \Levi_{\rho}(L,L)}{\norm{\nabla \rho}} &= \frac{1}{4} L_n \Levi_{\rho}(L,L) = \frac{1}{4} L_n g(\nabla_L \nabla \rho, L) \\
		&= \frac{1}{4} g(\nabla_{L_n} \nabla_L \nabla \rho, L) + \frac{1}{4} g(\nabla_L \nabla \rho, \nabla_{\overline{L}_n} L) \\
		&= \frac{1}{4} g(\nabla_L \nabla_{L_n} \nabla \rho, L) + \frac{1}{4} g(\nabla_{[L_n, L]} \nabla \rho, L) + \frac{1}{4} g(\nabla_L \nabla \rho, \nabla_{\overline{L}_n} L)
		\end{align*}
		at $p$. If $L$ vanishes at $p$, then the last three terms are all zero and hence $N\Levi_{\rho}(L,L) = 0$. Thus the equation \ref{4.0.1} holds. Therefore, we assume that $L \neq 0$ at $p$.

		Let $\{\sqrt{2}T_1, \cdots, \sqrt{2}T_{n-2}, \sqrt{2}T_{n-1} \} $ be an orthonormal basis of $T^{1,0}_p(\partial \O)$ with $\sqrt{2}T_{n-1} = \frac{L}{\norm{L}}$. Then first,

		\begin{align*}
		& g(\nabla_{L} \nabla \rho, \nabla_{\overline{L}_n} L) \\
		= & g \left(  \nabla_{L} \nabla \rho, \sum_{j=1}^{n-1}g(\nabla_{\overline{L}_n} L, \sqrt{2}T_j)\sqrt{2}T_j  +  g(\nabla_{\overline{L}_n} L, \sqrt{2}N)\sqrt{2}N   \right) \\
		= & \sum_{j=1}^{n-1} 2 \overline{g(\nabla_{\overline{L}_n} L, T_j)} \Levi_{\rho}(L, T_j)  +  2 \overline{g(\nabla_{\overline{L}_n} L, N)} \Levi_{\rho}(L,N) \\
		= & 2 \overline{g(\nabla_{\overline{L}_n} L, N)} \Levi_{\rho}(L,N) \\
		= & 4 \overline{g(\nabla_{\overline{N}} L, N)} \frac{\Levi_{\rho}(L,N)}{\norm{\nabla \rho}}.
		\end{align*}
		Here, we used Lemma \ref{orthgonal} in the third equality above.
		\begin{align*}
		& \Levi_{\rho}(N,L) = N(\overline{L}\rho) - (\nabla_N \overline{L})\rho = - (\nabla_N \overline{L})\rho \\
		\Rightarrow \phantom{a} & - \Levi_{\rho}(L,N) = (\nabla_{\overline{N}} L)\rho = 2 g(\nabla_{\overline{N}} L, N)(N\rho) = g(\nabla_{\overline{N}} L, N) \lVert \nabla \rho \rVert \\
		\Rightarrow \phantom{a} & g(\nabla_{\overline{N}} L, N) = - \frac{\Levi_{\rho}(L,N)}{\lVert \nabla \rho \rVert}. 
		\end{align*}
		Therefore, 
		\begin{align*}
		g(\nabla_{L} \nabla \rho, \nabla_{\overline{L}_n} L) = 
		- 4 \frac{|\Levi_{\rho}(L,N)|^2}{\norm{\nabla \rho}^2}. \\
		\end{align*}
		Second, by the same argument as above,
		\begin{align*}
		g(\nabla_{[L_n,L]} \nabla \rho, L) 
		=  \overline{g(\nabla_{L} \nabla \rho, [L_n,L])} 
		=  2 g([L_n,L], N) \overline{\Levi_{\rho}(L, N)}.
		\end{align*}
		\begin{align*}
		g([L_n,L], N) &= g([L_n,L], N + \overline{N}) = \frac{1}{\norm{\nabla \rho}} g([L_n,L], \nabla \rho) = \frac{1}{\norm{\nabla \rho}} [L_n,L] \rho \\
		&= \frac{1}{\norm{\nabla \rho}} \left( L_n (L\rho) - L(L_n \rho) \right) = 0.
		\end{align*}
		The last equation follows from $L\rho = 0$ and $L_n \rho = 1$. Therefore, 
		\begin{align*}
		g(\nabla_{[L_n,L]} \nabla \rho, L) = 0.
		\end{align*}
		All together, we have
		\begin{align*} 
		\frac{|\Levi_{\rho}(L,N)|^2}{\norm{\nabla \rho}^2} + \frac{1}{2} \frac{N \Levi_{\rho}(L,L)}{\norm{\nabla \rho}} = \frac{1}{4} g(\nabla_L \nabla_{L_n} \nabla \rho, L).
		\end{align*}
		
	\end{proof}

	\begin{prop} \label{prop d^c(alpha)(L,L)}
		Let $\alpha$ be a D'Angelo 1-form. Then 
		\begin{align} 
		\frac{|\Levi_{\rho}(L,N)|^2}{\norm{\nabla \rho}^2} + \frac{1}{2} \frac{N \Levi_{\rho}(L,L)}{\norm{\nabla \rho}} = \frac{i}{8}(d^c \alpha)(L, \overline{L})
		\end{align}
		holds at $p \in \Sigma_L$ for all $L$.
	\end{prop}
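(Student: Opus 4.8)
The plan is to combine the two preceding lemmas. By Lemma \ref{lem In terms of the D'Angelo one form 1} the left-hand side of the proposition already equals $\frac{1}{4}\, g(\nabla_L \nabla_{L_n} \nabla\rho, L)$ at a point $p \in \Sigma_L$, so the assertion is equivalent to
\[
  (d^c\alpha)(L,\overline{L}) \;=\; -2i\, g(\nabla_L \nabla_{L_n} \nabla\rho, L) \qquad \text{at } p \in \Sigma_L .
\]
To deal with the left-hand side I would fix any smooth real $1$-form on a neighborhood of $\partial\O$ extending $\alpha$, apply Lemma \ref{lem In terms of the D'Angelo one form 2} with $X = Y = L$, and obtain
\[
  (d^c\alpha)(L,\overline{L}) \;=\; i\big[\, -L(\alpha(\overline{L})) - \overline{L}(\alpha(L)) + \alpha(\nabla_L \overline{L}) + \alpha(\nabla_{\overline{L}} L) \,\big].
\]
Evaluating the four terms at $p$ will show that they involve $\alpha$ only along directions tangent to $\partial\O$; in particular the right-hand side is independent of the chosen extension, so $(d^c\alpha)(L,\overline{L})$ at $p$ is well defined, and it is computed using Lemma \ref{lem alpha(L)}.

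The crucial observation is that at a weakly pseudoconvex point the fields $\nabla_L\overline{L}$ and $\nabla_{\overline{L}}L$ are tangent to $\partial\O$. Since $\nabla$ is the flat Euclidean connection, $\nabla_L\overline{L}$ is of type $(0,1)$ and $\nabla_{\overline{L}}L$ of type $(1,0)$, so their normal components are multiples of $\overline{N}$ and $N$ respectively; differentiating $\overline{L}\rho \equiv 0$ in the direction $L$ (together with $L\rho \equiv 0$) shows that the $\overline{N}$-component of $\nabla_L\overline{L}$ is a constant multiple of $\Levi_\rho(L,L)$, which vanishes at $p \in \Sigma_L$, and conjugation gives the same for $\nabla_{\overline{L}}L$. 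Writing $M := (\nabla_{\overline{L}}L)_p \in T^{1,0}_p(\partial\O)$ we therefore have $(\nabla_L\overline{L})_p = \overline{M} \in T^{0,1}_p(\partial\O)$.

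Now I compute at $p$. Since $\alpha$ is real, $\alpha(L) = \overline{\alpha(\overline{L})}$, hence $-L(\alpha(\overline{L})) - \overline{L}(\alpha(L)) = -2\,\re\big(L(\alpha(\overline{L}))\big)$. By Lemmas \ref{lem family of good vector fields 1} and \ref{lem alpha(L)} one has $\alpha(\overline{L}) = \Levi_\rho(L_n, L) = g(\nabla_{L_n}\nabla\rho, L)$ on $\partial\O$; since $L$ is tangent to $\partial\O$ we may compute $L(\alpha(\overline{L}))$ at $p$ by differentiating the ambient function $g(\nabla_{L_n}\nabla\rho, L)$, and metric compatibility gives
\[
  L(\alpha(\overline{L})) \;=\; g(\nabla_L \nabla_{L_n} \nabla\rho, L) + g(\nabla_{L_n}\nabla\rho,\, \nabla_{\overline{L}} L) \;=\; g(\nabla_L \nabla_{L_n} \nabla\rho, L) + \Levi_\rho(L_n, M)
\]
at $p$. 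For the last two terms, Lemma \ref{lem alpha(L)} applied to a $(1,0)$ field extending $M$ (its value at $p$ depends only on $M$) gives $\alpha(\overline{M}) = \Levi_\rho(L_n, M)$, and reality of $\alpha$ gives $\alpha(M) = \overline{\alpha(\overline{M})} = \overline{\Levi_\rho(L_n, M)}$, so $\alpha(\nabla_L\overline{L}) + \alpha(\nabla_{\overline{L}}L) = 2\,\re\big(\Levi_\rho(L_n, M)\big)$. Substituting into the bracket, the two copies of $\re\big(\Levi_\rho(L_n, M)\big)$ cancel and the bracket collapses to $-2\,\re\big(g(\nabla_L \nabla_{L_n} \nabla\rho, L)\big)$. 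Finally $g(\nabla_L \nabla_{L_n} \nabla\rho, L)$ is real at $p \in \Sigma_L$: by Lemma \ref{lem In terms of the D'Angelo one form 1} it is four times the left-hand side of the proposition, whose first summand is $\ge 0$ and whose second summand $\frac{1}{2}\,N\Levi_\rho(L,L)/\norm{\nabla\rho}$ is real because $\Levi_\rho(L,L)$ attains a minimum along $\partial\O$ at $p$ by pseudoconvexity, so the tangential part of $N$ annihilates it and only its real normal part contributes. Hence $(d^c\alpha)(L,\overline{L}) = -2i\, g(\nabla_L \nabla_{L_n} \nabla\rho, L)$, and with Lemma \ref{lem In terms of the D'Angelo one form 1} this is the proposition.

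The step I expect to be the main obstacle is the tangentiality claim: the identity expressing the normal component of $\nabla_L\overline{L}$ through $\Levi_\rho(L,L)$ is exactly what makes the values of the D'Angelo $1$-form in the normal direction disappear from the computation at weakly pseudoconvex points, after which only Lemma \ref{lem alpha(L)} and the metric compatibility formula are needed and the algebra closes. Everything else is bookkeeping of $(1,0)$/$(0,1)$ types and of complex conjugates.
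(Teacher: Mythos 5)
Your proof is correct, and its skeleton coincides with the paper's: both reduce the left-hand side to $\tfrac14 g(\nabla_L\nabla_{L_n}\nabla\rho,L)$ via Lemma \ref{lem In terms of the D'Angelo one form 1}, expand the exterior derivatives through Lemma \ref{lem In terms of the D'Angelo one form 2}, use $(\nabla_{\overline L}L)\rho=-\Levi_{\rho}(L,L)=0$ to see that $\nabla_{\overline L}L$ is tangential of type $(1,0)$ at $p$, and then identify $\alpha(\overline L)$ and $\alpha(\nabla_L\overline L)$ with Levi-form quantities via Lemma \ref{lem alpha(L)}. The one genuine divergence is how the antisymmetric part is disposed of: the paper writes the target as $\tfrac18(d\alpha+id^c\alpha)(L,\overline L)$ and invokes Proposition \ref{prop d(alpha) vanishes} (the Boas--Straube vanishing of $d\alpha$ on $\Null_p\oplus\overline{\Null}_p$), whereas you never touch $d\alpha$ and instead prove directly that $g(\nabla_L\nabla_{L_n}\nabla\rho,L)$ is real at $p\in\Sigma_L$, by noting that $\Levi_{\rho}(L,L)|_{\partial\O}$ attains a minimum at $p$ by pseudoconvexity, so the tangential field $\mathrm{Im}\,N$ annihilates it and $N\Levi_{\rho}(L,L)(p)$ is real. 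Since the same algebra shows $(d\alpha)(L,\overline L)=2i\,\mathrm{Im}\,g(\nabla_L\nabla_{L_n}\nabla\rho,L)$ at such $p$, your reality argument is precisely an elementary, self-contained proof of the special case of Proposition \ref{prop d(alpha) vanishes} that this computation requires; what it buys is independence from that citation, at the cost of a slightly longer calculation. Your explicit remark that the value of $(d^c\alpha)(L,\overline L)$ at $p\in\Sigma_L$ is independent of the chosen ambient extension of $\alpha$ (because all four terms see $\alpha$ only along tangential directions once $\nabla_L\overline L$ is tangent) addresses a point the paper passes over in silence.
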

	
	\begin{proof}
		By Lemma \ref{lem In terms of the D'Angelo one form 1}, 
		\begin{align*} 
		&\frac{|\Levi_{\rho}(L,N)|^2}{\norm{\nabla \rho}^2} + \frac{1}{2} \frac{N \Levi_{\rho}(L,L)}{\norm{\nabla \rho}} \\
		=& \frac{1}{4} g(\nabla_L \nabla_{L_n} \nabla \rho, L) \\
		=& \frac{1}{4} L g(\nabla_{L_n} \nabla \rho, L) - \frac{1}{4} g(\nabla_{L_n} \nabla \rho, \nabla_{\overline{L}} L ) \\
		=& \frac{1}{4} L \Levi_{\rho}(L_n, L) - \frac{1}{4} \Levi_{\rho}(L_n, \nabla_{\overline{L}} L)
		\end{align*}
		Since $\Levi_{\rho}(L,L) = 0$ at $p$,
		\begin{align} \label{equ 4.31}
			0 = \Levi_{\rho}(L,L) = \overline{L}( L\rho ) - (\nabla_{\overline{L}} L) \rho = - (\nabla_{\overline{L}} L) \rho
		\end{align}
		implies that $ (\nabla_{\overline{L}} L) \in T^{1,0}_p(\partial \O)$.
		Then by Lemma \ref{lem alpha(L)}, 
		\begin{align*}
		\alpha(\overline{L}) &= 2\frac{\Levi_{\rho}(N,L)}{\norm{\nabla \rho}} = \Levi_{\rho}(L_n, L), \\
		\alpha(\nabla_{L} \overline{L}) &= 2\frac{\Levi_{\rho}(N,\nabla_{\overline{L}} L)}{\norm{\nabla \rho}} = \Levi_{\rho}(L_n, \nabla_{\overline{L}} L).
		\end{align*}
		Note that we did not use $\Levi_{\rho}(L,L) = 0$ in the proof of Lemma \ref{lem alpha(L)}; we just used $L \rho=0$.
		Altogether, we have
		$$ \frac{|\Levi_{\rho}(L,N)|^2}{\norm{\nabla \rho}^2} + \frac{1}{2} \frac{N \Levi_{\rho}(L,L)}{\norm{\nabla \rho}} = \frac{1}{4} L(\alpha(\overline{L})) - \frac{1}{4} \alpha(\nabla_{L} \overline{L}). $$
		Now by Lemma \ref{lem In terms of the D'Angelo one form 2} and the above equation, 
		\begin{align} \label{equ 4.3}
		\frac{1}{8}(d\alpha + id^c \alpha)(L, \overline{L}) = \frac{1}{4} L( \alpha(\overline{L})) - \frac{1}{4} \alpha(\nabla_{L} \overline{L}) = \frac{|\Levi_{\rho}(L,N)|^2}{\norm{\nabla \rho}^2} + \frac{1}{2} \frac{N \Levi_{\rho}(L,L)}{\norm{\nabla \rho}}.
		\end{align}
		Since $d\alpha(L, \overline{L}) = 0$ by Proposition \ref{prop d(alpha) vanishes}, we have
		\begin{align*} 
		\frac{|\Levi_{\rho}(L,N)|^2}{\norm{\nabla \rho}^2} + \frac{1}{2} \frac{N \Levi_{\rho}(L,L)}{\norm{\nabla \rho}} = \frac{i}{8}(d^c \alpha)(L, \overline{L}).
		\end{align*}

	\end{proof}

	\begin{proof}[\bf Proof of Theorem \ref{thm main DF,S D'Angelo 1-form}]
		We prove it for the Steinness index case. Since $\alpha$ is a real 1-form, we may write it as $\alpha = \omega + \overline{\omega}$. Then
		\begin{align} \label{4.5}
			\frac{i}{2}d^c\alpha(L,\overline{L}) = \frac{1}{2}(d+id^c)\alpha(L,\overline{L})
			= \partial \alpha(L,\overline{L}) = \partial \overline{\omega}(L,\overline{L})
			= - \overline{\partial} \omega(L,\overline{L}).
		\end{align}
		Here, Proposition \ref{prop d(alpha) vanishes} is used in the first equality.
		Hence, from Lemma \ref{lem alpha(L)} and Proposition \ref{prop d^c(alpha)(L,L)},
		\begin{align*}
		&\frac{1}{\eta_2 - 1} \frac{|\Levi_{\rho}(L,N)|^2}{\lVert \nabla \rho \rVert^2} - \frac{1}{2} \frac{N\Levi_{\rho}(L,L)}{\lVert \nabla \rho \rVert} \le 0 \\
		\Leftrightarrow \phantom{aa} & \left( \frac{\eta_2}{\eta_2 - 1}(\omega \wedge \overline{\omega}) - \frac{i}{2} d^c \alpha \right) (L, \overline{L}) \le 0 \\
		\Leftrightarrow \phantom{aa} & \left( \frac{\eta_2}{\eta_2 - 1}(\omega \wedge \overline{\omega}) + \overline{\partial} \omega \right) (L, \overline{L}) \le 0. 
		\end{align*}
		Therefore, the theorem is proved by Theorem \ref{thm S Yum2019}. For the Diederich-Fornaess index case, it follows from the same argument as above.
		
	\end{proof}

	\begin{rmk}
		We give an alternative proof for Lemma 2.2 in \cite{Yum2019}.
		Let $\widetilde{\rho}$ and $\rho$ be $C^k (k \ge 3)$-smooth defining functions of $\O$. Then there exists a $C^{k-1}$-smooth real-valued function $\psi$ such that $\widetilde{\rho} = \rho e^{\psi}$. Let $\widetilde{\eta} = \eta_{\widetilde{\rho}}$, $\eta = \eta_{\rho}$, $\widetilde{T} = T_{\widetilde{\rho}}$ and $T = T_{\rho}$ as in Section \ref{section D'Angelo 1-form}.
		Let $\widetilde{\alpha} = -\Lie_{\widetilde{T}} \widetilde{\eta}$ and $\alpha = -\Lie_T \eta$.
		By a direct calculation, $\widetilde{\eta} = e^{\psi} \eta$ and $\widetilde{T} = e^{-\psi} T$.
		Then for $X \in T^{1,0}(\partial \O)$, as in the proof of Lemma \ref{lem alpha(L)},
		\begin{align*}
		\widetilde{\alpha}(X) = \widetilde{\eta}([\widetilde{T}, X]) = e^{\psi} \eta([e^{-\psi}T, X] ) = \eta([T,X]) + X\psi = \alpha(X) + d\psi(X).
		\end{align*}
		Since $\widetilde{\alpha}, \alpha, \psi$ are real, 
		$$ \widetilde{\alpha}(\overline{X}) = \alpha(\overline{X}) + d\psi(\overline{X}). $$ 
		Moreover, by the equation (\ref{equ 4.3}) and the above equation, for $L \in \Null_p$, $p \in \partial \O$,
		\begin{align*}
			id^c \widetilde{\alpha} (L, \overline{L}) 
			=& 2 L(\widetilde{\alpha}(\overline{L})) - 2 \widetilde{\alpha}(\nabla_{L}\overline{L}) \\
			=& 2 L \left( \alpha(\overline{L}) + \overline{L}\psi \right) - 2 \left( \alpha(\nabla_{L}\overline{L}) + (\nabla_{L}\overline{L})\psi \right) \\
			=& 2 L (\alpha(\overline{L})) - 2 \alpha(\nabla_{L}\overline{L}) +
			2 L(\overline{L}\psi) - 2 (\nabla_{L}\overline{L})\psi \\
			=& id^c \alpha(L,\overline{L}) - idd^c \psi (L,\overline{L}) .
		\end{align*}
		In the second equlity, we used $\nabla_{L}\overline{L} \in T^{0,1}(\partial \O)$ (see the equation (\ref{equ 4.31})).
	\end{rmk}
	

	\vspace{5mm}


	\section{\bf CR-invariance of two indices}
	
	In the previous section, we showed that the Diederich-Fornaess index and Steinness index are completely determined by D'Angelo 1-forms on the set of weakly pseudoconvex boundary points.
	In this section, we prove that those two indices are invariant under CR-diffeomorphisms by showing that D'Angelo 1-forms are invariant under CR-diffeomorphisms in some sense.

	\begin{defn}
		Let $\O_1$ and $\O_2$ be domains in $\CC^n$ $(n \ge 2)$ with $C^k (k \ge 1)$-smooth boundaries. A $C^k$-smooth function $f : \O_1 \rightarrow \CC$ is called a {\it CR-function} if
		$$ \overline{L} f = 0, $$
		for all $p \in \partial \O_1$ and $L \in T^{1,0}_p(\partial \O_1)$. A $C^k$-smooth function $F : \partial \O_1 \rightarrow \partial \O_2$ is called a {\it CR-map} if 
		$$ F(T^{1,0}_p(\partial \O_1)) \subset T^{1,0}_{f(p)}(\partial \O_2), $$
		for all $p \in \partial \O_1$.  A $C^k$-smooth function $F : \partial \O_1 \rightarrow \partial \O_2$ is called a {\it CR-diffeomorphism} if it is a CR-map and a diffeomorphism. 
	\end{defn}

	\begin{thm}[Boggess \cite{Boggess1991}] \label{thm CR-extension dbar f vanishes}
		Suppose $M$ is a $C^k$, $k \ge 2$, generic CR submanifold of $\CC^n$ with real dimension $2n-d$, $1 \le d \le n$. If $f$ is a $C^k$ CR-function on $M$, then there exists a $C^k$ function $F$ defined on $\CC^n$ such that $\overline{\partial} F$ vanishes on $M$ to order $k-1$ and $F|_M = f$.
	\end{thm}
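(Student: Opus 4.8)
This is Boggess's theorem; the argument I would give is the standard $\overline{\partial}$-flat (``almost holomorphic'') extension scheme. The plan is to produce $F$ by prescribing its $k$-jet along $M$ so that the jet restricts to $f$ and is $\overline{\partial}$-flat to order $k-1$, and then to invoke Whitney's extension theorem; the CR hypothesis on $f$ is exactly what makes such a jet a \emph{consistent} Whitney field. Everything being local, fix $p \in M$. Using that $M$ is generic, after a $\CC$-linear change of coordinates we may write $M = \{ \rho_1 = \cdots = \rho_d = 0 \}$ near $p$, with $\rho_1,\dots,\rho_d \in C^k$ real and $\partial\rho_1 \wedge \cdots \wedge \partial\rho_d \neq 0$. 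Conjugating, $\overline{\partial}\rho_1,\dots,\overline{\partial}\rho_d$ are linearly independent $(0,1)$-forms annihilating $T^{0,1}M$, and since $\dim_{\CC} T^{0,1}_q M = n-d$ they span, at every $q \in M$, the whole annihilator of $T^{0,1}_q M$ inside $\Lambda^{0,1}_q$. Hence any $C^m$ $(0,1)$-form $\beta$ along $M$ with $\beta(\overline{T}) = 0$ for all $\overline{T} \in T^{0,1}M$ can be written $\beta = \sum_l h_l\,\overline{\partial}\rho_l$ with $h_l \in C^m$ on $M$.

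Next I would build the jet by an order-by-order correction near $p$. Start from any $C^k$ extension $F_0$ of $f$, and suppose inductively that $F_j|_M = f$ and that $\overline{\partial}F_j$ vanishes on $M$ to order $j$, with leading coefficient along $M$ a $(0,1)$-form $\beta_j$. From $\overline{\partial}(\overline{\partial}F_j) = 0$ together with the order-$j$ vanishing, one reads off that $\beta_j$ annihilates $T^{0,1}M$: for $j=0$ this is precisely the CR equation $\overline{L}f = 0$, while for $j \ge 1$ it is automatic from $\overline{\partial}^2 = 0$. By the previous paragraph $\beta_j = \sum_l h_l\,\overline{\partial}\rho_l$, and setting $F_{j+1} := F_j - \tfrac{1}{j+1}\sum_l \rho_l^{\,j+1} h_l$ (with the $h_l$ extended arbitrarily off $M$) one checks that $\overline{\partial}F_{j+1}$ vanishes to order $j+1$ while $F_{j+1}|_M = F_j|_M = f$, the correction being $O(\dist(\cdot,M)^{\,j+1})$. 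Running $j = 0,1,\dots,k-1$ produces, near $p$, a function whose $k$-jet along $M$ restricts to $f$ and whose $\overline{\partial}$ has vanishing $(k-1)$-jet along $M$.

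The delicate point, which I expect to be the main obstacle, is regularity: performing this correction literally, by dividing $\overline{\partial}F_j$ by the defining functions at each stage, costs a derivative every time and exhausts the $C^k$ budget well before order $k-1$. The remedy is to run the whole argument at the level of $k$-jets along $M$: rather than dividing, one directly prescribes the Whitney field $\mathbf{F} = (F^{(\alpha)})_{|\alpha| \le k}$ on $M$ by taking $F^{(0)} = f$ and choosing the remaining components so that the induced jet of $\overline{\partial}\mathbf{F}$ vanishes through order $k-1$; the CR equation $\overline{L}f = 0$ guarantees the order-$1$ layer can be filled in consistently with $F^{(0)} = f$, and $\overline{\partial}^2 = 0$ does the same for the higher layers, so that $\mathbf{F}$ is a genuine Whitney field of class $C^k$. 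Whitney's extension theorem then yields $F \in C^k(\CC^n)$ with jet $\mathbf{F}$ along $M$, whence $F|_M = f$ and $\overline{\partial}F$ vanishes on $M$ to order $k-1$ (its $(k-1)$-jet along $M$ being that of $\overline{\partial}\mathbf{F}$, i.e. zero). Finally, cover $M$ by such coordinate neighborhoods and glue the local extensions with a subordinate partition of unity; since two local extensions of $f$ necessarily agree to order $k$ along $M$ (a $C^k$ function vanishing on $M$ whose $\overline{\partial}$ is flat to order $k-1$ is itself flat to order $k$), the partition-of-unity cross terms vanish on $M$ to order $k-1$, and the resulting global $F$ has all the required properties. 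A secondary point needing care throughout is the genericity hypothesis on $M$: it is precisely what lets one split off the $\overline{\partial}\rho_l$-components at each step, and without it the span condition — and hence the scheme — breaks down.
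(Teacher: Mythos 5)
First, a point of order: the paper contains no proof of this statement. It is quoted from Boggess's book and used as a black box in the proof of Proposition \ref{prop CR-invariance of D'Angelo 1 form}, so there is no in-paper argument to measure your proposal against; the comparison can only be with the cited source. Your proposal is the standard almost-analytic ($\overline{\partial}$-flat) extension scheme and is correct in outline. The textbook proof proceeds in the same spirit but more concretely: it writes the generic submanifold $M$ in graph coordinates over $\CC^{n-d}_z\times\RR^d_u$ and defines $F$ by an explicit Taylor polynomial in the transverse variables whose coefficients are tangential derivatives of $f$, which produces the extension directly and sidesteps both Whitney's extension theorem and the partition-of-unity gluing. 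Your route buys coordinate-freeness at the cost of having to verify a Whitney compatibility condition.

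Two steps in your sketch are asserted rather than proved, and they are where the actual work lies. (i) For codimension $d>1$ and $j\ge 1$, the leading part of $\overline{\partial}F_j$ is not a single $(0,1)$-form $\beta_j$ but a family $\sum_{|\alpha|=j}\rho^{\alpha}\beta_{\alpha}$; the identity $\overline{\partial}(\overline{\partial}F_j)=0$ yields only the coupled relations $\sum_{l}(\gamma_l+1)\,\overline{\partial}\rho_l\wedge\beta_{\gamma+e_l}=0$ on $M$ for $|\gamma|=j-1$, and deducing that each $\beta_{\alpha}$ individually annihilates $T^{0,1}M$ requires a (standard, Koszul-type) linear-algebra argument that you omit; correspondingly, your correction $\tfrac{1}{j+1}\sum_l\rho_l^{\,j+1}h_l$ removes only the pure-power monomials and must be replaced by a sum over all multi-indices $|\alpha|=j+1$. (ii) You rightly identify the loss-of-derivatives problem and propose to work at the level of Whitney fields, but the assertion that the resulting jet ``is a genuine Whitney field of class $C^k$'' is precisely the regularity content of the theorem: the order-$m$ components of the jet consume $m$ derivatives of $f$, and one must actually verify the Whitney remainder estimates; this bookkeeping is also where the exponent $k-1$, rather than $k$, in the conclusion comes from. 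Neither point is a wrong turn, but as written the proposal is a correct road map rather than a complete proof.
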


	\begin{prop} \label{prop CR-invariance of D'Angelo 1 form}
		Let $\O_1$ and $\O_2$ be bounded pseudoconvex domains in $\CC^n$ with $C^k (k \ge 3)$-smooth boundaries, and $\rho$ be a defining function of $\O_2$. Suppose that there exists a $C^k$-smooth CR-diffeomorphism $f : \partial \O_1 \rightarrow \partial \O_2$. Then 
		\begin{align} \label{CR-invariance of D'Angelo 1 form}
		\alpha_{(\rho \circ f)}(\overline{L})(p) &= \alpha_{\rho}(\overline{f_{*}(L)})(f(p)), \\
		d^c \alpha_{(\rho \circ f)}(L, \overline{L})(p) &= d^c \alpha_{\rho}(f_{*}(L), \overline{f_{*}(L)})(f(p)), \nonumber
		\end{align}
		for all $p \in \partial \O_1$ and $L \in \Null_{p}$.
	\end{prop}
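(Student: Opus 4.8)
The plan is to encode the local content of $\alpha_\rho$ along $\Null_p$ by the two boundary functions $\Levi_\rho(N,L)/\norm{\nabla\rho}$ — which is $\tfrac12\,\alpha_\rho(\overline L)$ by Lemma~\ref{lem alpha(L)} — and $N\Levi_\rho(L,L)/\norm{\nabla\rho}$ — which, together with the first, recovers $d^c\alpha_\rho(L,\overline L)$ by Proposition~\ref{prop d^c(alpha)(L,L)} — then to replace $f$ by an ambient map $F$ that is holomorphic to high order along $\partial\O_1$, and to show each of these two functions transports correctly under $F$.

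\noindent\textbf{Step 1 (extension).} Since $\partial\O_1$ is a $C^k$ generic CR submanifold of real codimension one and the components of $f$ are CR-functions, Theorem~\ref{thm CR-extension dbar f vanishes} produces a $C^k$ map $F$ near $\partial\O_1$ with $F|_{\partial\O_1}=f$ and $\overline\partial F=\mathcal{O}\big(\dist(\cdot,\partial\O_1)^{k-1}\big)$, $k-1\ge 2$. Thus $\overline\partial F$ and $\partial\overline\partial F$ vanish on $\partial\O_1$, and the order-two vanishing $\overline\partial F=\mathcal{O}(\dist^2)$ forces $D^2(\overline\partial F)$ along $\partial\O_1$ to be a scalar multiple of $(d\rho)^{\otimes 2}$, hence to annihilate any pair of vectors one of which is tangent to $\partial\O_1$. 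At $p\in\partial\O_1$ the differential $dF_p$ commutes with the complex structure (as $\overline\partial F(p)=0$), so it is orientation preserving and, after shrinking, $\widetilde\rho:=\rho\circ F$ is a $C^k$ defining function of $\O_1$; since the D'Angelo $1$-form does not depend on the sign of the defining function, $\alpha_{(\rho\circ f)}=\alpha_{\widetilde\rho}$. Moreover $dF_p$ agrees on $T^{1,0}_p(\CC^n)$ with the holomorphic Jacobian $\widehat F_p$, and $dF_p L=f_*L$ for $L\in T^{1,0}_p(\partial\O_1)$.

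\noindent\textbf{Step 2 (first identity).} Differentiating $\widetilde\rho=\rho\circ F$ twice and discarding every term containing a factor $\overline\partial F$ or $\partial\overline\partial F$ gives $\Levi_{\widetilde\rho}(X,Y)(p)=\Levi_\rho(\widehat F_pX,\widehat F_pY)(f(p))$ for $X,Y\in T^{1,0}_p(\CC^n)$; in particular $f_*$ carries $\Null_p$ isomorphically onto $\Null_{f(p)}$ and weakly pseudoconvex points to weakly pseudoconvex points. With $X=L_{n,\widetilde\rho}$ and $Y=L\in\Null_p$, Lemma~\ref{lem alpha(L)} (which uses only $L\widetilde\rho=0$) yields
$$\alpha_{\widetilde\rho}(\overline L)(p)=\Levi_{\widetilde\rho}(L_{n,\widetilde\rho},L)(p)=\Levi_\rho\big(dF_pL_{n,\widetilde\rho},f_*L\big)(f(p)).$$
Now $(dF_pL_{n,\widetilde\rho})\rho=L_{n,\widetilde\rho}\widetilde\rho(p)=1$, so $dF_pL_{n,\widetilde\rho}=L_{n,\rho}(f(p))+V$ with $V$ a $(1,0)$ vector tangent to $\partial\O_2$; since $f(p)$ is weakly pseudoconvex and $f_*L\in\Null_{f(p)}$, Lemma~\ref{orthgonal} gives $\Levi_\rho(V,f_*L)(f(p))=0$. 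Hence $\alpha_{\widetilde\rho}(\overline L)(p)=\Levi_\rho(L_{n,\rho},f_*L)(f(p))=\alpha_\rho(\overline{f_*L})(f(p))$, the first equation of \eqref{CR-invariance of D'Angelo 1 form}.

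\noindent\textbf{Step 3 (second identity; the main obstacle).} By Proposition~\ref{prop d^c(alpha)(L,L)} on both sides, it suffices to match at $p\leftrightarrow f(p)$ the quantity $\dfrac{|\Levi_{\widetilde\rho}(L,N)|^2}{\norm{\nabla\widetilde\rho}^2}+\dfrac12\dfrac{N\Levi_{\widetilde\rho}(L,L)}{\norm{\nabla\widetilde\rho}}$. Its first summand equals $\tfrac14|\alpha_{\widetilde\rho}(\overline L)|^2$ by Lemma~\ref{lem alpha(L)}, hence transports by Step~2. Its second summand, $\tfrac14\,L_{n,\widetilde\rho}\big(\Levi_{\widetilde\rho}(L,L)\big)$, is third order in $\widetilde\rho$; expanding $\widetilde\rho=\rho\circ F$ once more, the holomorphic skeleton of the chain rule reproduces the $\rho$-side value — up to tangential corrections of Levi-form type killed by Lemma~\ref{orthgonal} as in Step~2, and using that $N\Levi(L,L)$ at a weakly pseudoconvex null point depends only on the value of $L$ there (a consequence of Proposition~\ref{prop d^c(alpha)(L,L)}) — while every residual term carries a factor $\overline\partial F$, $\partial\overline\partial F$, or a second derivative of $\overline\partial F$. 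The first two vanish on $\partial\O_1$; in the last, the two derivatives interior to $N\Levi_{\widetilde\rho}(L,L)$ act along the complex-tangential field $L$, so the term is a contraction of $D^2(\overline\partial F)|_{\partial\O_1}$ with $L$ in one slot, and since $D^2(\overline\partial F)|_{\partial\O_1}\propto(d\widetilde\rho)^{\otimes 2}$ and $L\widetilde\rho=0$ this vanishes at $p$. Consequently $d^c\alpha_{\widetilde\rho}(L,\overline L)(p)=d^c\alpha_\rho(f_*L,\overline{f_*L})(f(p))$. The hard part is exactly this bookkeeping — extracting the holomorphic skeleton of the third-order chain rule and checking that all non-holomorphy errors die at $p$; the crucial point, and the reason the hypothesis $k\ge 3$ (rather than $k\ge 4$) suffices, is the sharp vanishing $\overline\partial F=\mathcal{O}(\dist(\cdot,\partial\O_1)^2)$, which makes $D^2(\overline\partial F)|_{\partial\O_1}$ proportional to $(d\widetilde\rho)^{\otimes 2}$ and hence harmless when paired with the null complex-tangential direction.
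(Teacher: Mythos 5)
Your Steps 1 and 2 coincide with the paper's argument: the same Boggess extension (Theorem \ref{thm CR-extension dbar f vanishes}), the same transport of the Levi form, the same decomposition $dF_pL_{n,\widetilde\rho}=L_{n,\rho}+V$ with the cross term killed by Lemma \ref{orthgonal}. The first identity in \eqref{CR-invariance of D'Angelo 1 form} is therefore fine.

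The gap is in Step 3. You reduce the second identity to matching $\frac{|\Levi(L,N)|^2}{\norm{\nabla\rho}^2}+\frac12\frac{N\Levi(L,L)}{\norm{\nabla\rho}}$ and then assert that the ``holomorphic skeleton'' of a third-order chain-rule expansion reproduces the $\rho$-side value ``up to tangential corrections of Levi-form type killed by Lemma \ref{orthgonal}.'' That is exactly the step that needs proof, and the mechanism you cite does not cover the main correction: writing $\frac12 N\Levi_{\widetilde\rho}(L,L)/\norm{\nabla\widetilde\rho}=\frac14 L_{n,\widetilde\rho}\Levi_{\widetilde\rho}(L,L)$ and pushing forward, the tangential component $V$ of $dF_pL_{n,\widetilde\rho}$ contributes $V\big(\Levi_\rho(f_*L,f_*L)\big)$ --- a tangential \emph{derivative} of the Levi form, not a value of it, so Lemma \ref{orthgonal} (Cauchy--Schwarz for the Levi form) says nothing about it. It does vanish, but for a different reason (the nonnegative function $\Levi_\rho(f_*L,f_*L)$ attains an interior minimum at $f(p)$), which you would have to argue and which requires extending $f_*L$ to a field off the point. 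Similarly, the claim that $N\Levi(L,L)$ at a null point depends only on $L(p)$ is a \emph{consequence} of Proposition \ref{prop d^c(alpha)(L,L)}; it cannot be fed back in to certify an unproved transport identity. The paper sidesteps this bookkeeping entirely: by Lemma \ref{lem In terms of the D'Angelo one form 2} together with Proposition \ref{prop d(alpha) vanishes}, one has $i\,d^c\alpha(L,\overline L)=2L(\alpha(\overline L))-2\alpha(\nabla_L\overline L)$ for $L\in\Null_p$; a short coordinate computation gives $\overline{f_*(\nabla_{\overline L}L)}=\nabla_{f_*L}\overline{f_*L}$, and then both terms transport using the already-established first identity. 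You should either adopt that route or actually carry out the third-order expansion with every error term identified and shown to vanish on $\partial\O_1$.
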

	
	\begin{proof}
		Let $\{z_j\}_{j=1}^n$ and $\{w_j\}_{j=1}^n$ be coordinates of $\O_1 \subset \CC^n$ and ${\O_2 \subset \CC^n}$, respectively.
		By Theorem \ref{thm CR-extension dbar f vanishes}, we may extend $f$ to $F$ such that $\overline{\partial} F$ vanishes on $\partial \O_1$ to order $k-1$ and $F|_M = f$. We will denote the extension $F$ again by $f$. Then,
		$$ \Levi_{(\rho \circ f)}(X, Y)(p) = \Levi_{\rho}(f_{*}(X), f_{*}(Y))(f(p)) $$
		for all $p \in \partial \O_1$ and $X,Y \in T^{1,0}_p(\partial \O_1)$.
		Denote
		$$ L_{n, \rho} := \frac{1}{\sum^n_{j=1} |\frac{\partial \rho}{\partial w_j}|^2} \sum^n_{j=1} \frac{\partial \rho}{\partial \overline{w}_j} \frac{\partial}{\partial w_j}. $$

		\noindent	
		{\bf Claim.} $f_{*}(L_{n,(\rho \circ f)}) = L_{n,\rho} + \widetilde{L}$ for some $\widetilde{L} \in T^{1,0}_{f(p)}(\partial \O_2)$.

		Let $f = (f_1, \cdots, f_n)$. Then for $p \in \partial \O_1$ and $q := f(p) \in \partial \O_2$,
		$$ \left. \frac{\partial (\rho \circ f)}{\partial \overline{z}_j} \right|_p 
		= \sum_{k=1}^n \left.\frac{\partial \rho}{\partial \overline{w}_k}\right|_q 
		\left.\frac{\partial \overline{f}_k}{\partial \overline{z}_j}\right|_p, \phantom{aaaa} 
		\left.\frac{\partial (\rho \circ f)}{\partial z_j}\right|_p 
		= \sum_{k=1}^n \left.\frac{\partial \rho}{\partial w_k}\right|_q 
		\left.\frac{\partial f_k}{\partial z_j}\right|_p,     $$
		$$ \left| \left.\frac{\partial (\rho \circ f)}{\partial z_j}\right|_p \right|^2  
		= \sum_{k,l=1}^n \left.\frac{\partial \rho}{\partial \overline{w}_k}\right|_q 
		\left.\frac{\partial \overline{f}_k}{\partial \overline{z}_j}\right|_p
		\left.\frac{\partial \rho}{\partial w_l}\right|_q 
		\left.\frac{\partial f_l}{\partial z_j}\right|_p,   $$
		and
		$$f_{*}\left( \sum_{j=1}^n \left. \frac{\partial (\rho \circ f)}{\partial \overline{z}_j} \right|_p \left.\frac{\partial}{\partial z_j}\right|_p \right) 
		= \sum_{j,k,l=1}^n \left.\frac{\partial f_l}{\partial z_j}\right|_p
		\left.\frac{\partial \rho}{\partial \overline{w}_k}\right|_q 
		\left.\frac{\partial \overline{f}_k}{\partial \overline{z}_j}\right|_p
		\left.\frac{\partial}{\partial w_l}\right|_q .    
		$$
		Hence, $f_{*}(L_{n,(\rho \circ f)}) \rho \equiv 1$ on $\partial \O_2$. 
		Let $f_{*}(L_{n,(\rho \circ f)}) = a L_{n,\rho} + \widetilde{L}$ for some $a \in \CC$ and $\widetilde{L} \in T^{1,0}_{f(p)}(\partial \O_2)$. Then
		$$ 1 = f_{*}(L_{n,(\rho \circ f)}) \rho = a L_{n,\rho} \rho + \widetilde{L} \rho = a ,$$
		and the claim is proved.
		
		Now, by Lemma \ref{lem alpha(L)} and the claim above,
		\begin{align*}
			\alpha_{(\rho \circ f)}(\overline{L})(p) 
			&= \Levi_{(\rho \circ f)}(L_{n,(\rho \circ f)}, L)(p)  \\
			&= \Levi_{\rho}(f_{*}(L_{n,(\rho \circ f)}), f_{*}(L))(f(p)) \\
			&= \Levi_{\rho}(L_{n,\rho}, f_{*}(L))(f(p))  + \Levi_{\rho}(\widetilde{L}, f_{*}(L))(f(p)) \\
			&= \alpha_{\rho}(\overline{f_{*}(L)})(f(p)).
		\end{align*}
		The explanation for the fourth equality is the following. 
		Since $f$ is a CR-map and $L \in \Null_{p}$, we have $f_{*}(L) \in \Null_{f(p)}$.
		Hence by Lemma \ref{orthgonal}, $\Levi_{\rho}(\widetilde{L}, f_{*}(L))(f(p)) = 0$.
		Therefore, the first equation (\ref{CR-invariance of D'Angelo 1 form}) is proved. \\

		For the second equation, we first claim that $ \overline{f_{*}(\nabla_{\overline{L}}L)} = \nabla_{f_{*}(L)} \overline{f_{*}(L)} $ on $\partial \O_2$.
		Let $f^{-1} = (f^{-1}_1, \cdots, f^{-1}_n)$ and  
		$$ L = \sum_{j=1}^{n} a_j \frac{\partial}{\partial z_j}. $$
		Then 
		\begin{align*}
		\nabla_{\overline{L}}L = \sum_{j,k=1}^n \overline{a}_k \frac{\partial a_j}{\partial \overline{z}_k} \frac{\partial}{\partial z_j}, \phantom{aaa}
		\overline{f_{*}(\nabla_{\overline{L}}L)} = \sum_{j,k,l=1}^n a_k \frac{\partial \overline{a}_j}{\partial z_k} \frac{\partial \overline{f}_l}{\partial \overline{z}_j} \frac{\partial}{\partial \overline{w}_l}, \\
		f_{*}(L) = \sum_{j,k=1}^{n} a_j \frac{\partial f_k}{\partial z_j} \frac{\partial}{\partial w_k}, \phantom{aaa} 
		\overline{f_{*}(L)} = \sum_{i,l=1}^{n} \overline{a}_i \frac{\partial \overline{f}_l}{\partial \overline{z}_i} \frac{\partial}{\partial \overline{w}_l}. 
		\end{align*}
		Therefore,
		\begin{align*}
			\nabla_{f_{*}(L)} \overline{f_{*}(L)} 
			&= \sum_{i,j,k,l,m=1}^{n} a_j \frac{\partial f_k}{\partial z_j} \frac{\partial \overline{a}_i}{\partial z_m} \frac{\partial f_m^{-1}}{\partial w_k} \frac{\partial \overline{f}_l}{\partial \overline{z}_i} \frac{\partial}{\partial \overline{w}_l} 
			= \sum_{i,j,l,m=1}^{n} a_j \frac{\partial \overline{a}_i}{\partial z_m} \delta^m_j \frac{\partial \overline{f}_l}{\partial \overline{z}_i} \frac{\partial}{\partial \overline{w}_l} \\
			&= \sum_{i,j,l=1}^{n} a_j \frac{\partial \overline{a}_i}{\partial z_j} \frac{\partial \overline{f}_l}{\partial \overline{z}_i} \frac{\partial}{\partial \overline{w}_l}
			= \overline{f_{*}(\nabla_{\overline{L}}L)},
		\end{align*}
		where $\delta^m_j$ is the Kronecker delta.
		
		Now by Lemma \ref{lem In terms of the D'Angelo one form 2},
		\begin{align*}
		i\left( d^c \alpha_{(f \circ \rho)} \right)(L,\overline{L})(p) 
		&= \left( d\alpha_{(f \circ \rho)} + id^c \alpha_{(f \circ \rho)} \right) (L, \overline{L})(p) \\
		&= 2 L\left( \alpha_{(f \circ \rho)}(\overline{L}) \right)(p) - 2 \alpha_{(f \circ \rho)}(\nabla_{L} \overline{L})(p) \\
		&= 2 L \left( \alpha_{\rho}(\overline{f_{*}(L)}) \circ f \right) (p) - 2 \alpha_{\rho}\left(  \overline{f_{*}(\nabla_{\overline{L}}L)} \right) (f(p))\\
		&= 2 f_{*}(L) \left( \alpha_{\rho}(\overline{f_{*}(L)}) \right) (f(p)) - 2 \alpha_{\rho}\left( \nabla_{f_{*}(L)} \overline{f_{*}(L)} \right) (f(p))\\
		&= id^c \alpha_{\rho} \left(f_{*}(L), \overline{f_{*}(L)} \right) (f(p)).
		\end{align*}
		
	\end{proof}

	\begin{proof}[\bf Proof of Theorem \ref{thm main CR-invariance of two indices}]

		We prove that $S(\O_1) = S(\O_2)$.
		If $S(\O_1)$ and $S(\O_2)$ are both infinity, then we are done.
		Hence, we may assume $S(\O_1) < \infty$. Then, since $\O_1$ admits a Stein neighborhood basis, $\O_1$ is a pseudoconvex domain. Moreover, CR-equivalance of $\partial \O_1$ and $\partial \O_2$ implies that $\O_2$ is a pseudoconvex domain. Therefore, we can apply Theorem~\ref{thm main DF,S D'Angelo 1-form} for both $\O_1$ and $\O_2$.
		
		Let $\rho$ be a defining function of $\O_2$.
		Let $\Sigma_1$ and $\Sigma_2$ be the sets of weakly pseudoconvex boundary points on $\partial \O_1$ and $\partial \O_2$, respectively. Fix a point $p \in \Sigma_1$.
		Suppose $f : \partial \O_1 \rightarrow \partial \O_2$ is a CR-diffeomorphism. Then since $f$ is a CR-diffeomorphism, $\Levi_{\rho}(f_{*}(L),f_{*}(L))(f(p)) = 0$ for all $p \in \Sigma_1$ and $L \in \Null_p$.
		Therefore, $f|_{\Sigma_1} : \Sigma_1 \rightarrow \Sigma_2$ and $d_pf|_{\Null_p} : \Null_p \rightarrow \Null_{f(p)}$ are injective and onto. 
		
		Now by Proposition \ref{prop CR-invariance of D'Angelo 1 form}, if for some $\eta_2 > 1$, 
		$$ \left( \frac{\eta_2}{\eta_2 - 1}(\pi_{1,0} \alpha_{\rho} \wedge \pi_{0,1} \alpha_{\rho}) - \frac{i}{2} d^c \alpha_{\rho} \right) (f_{*}(L), \overline{f_{*}(L)}) \le 0, $$
		for all $L \in \Null_p$, then 
		$$ \left( \frac{\eta_2}{\eta_2 - 1}(\pi_{1,0} \alpha_{(\rho \circ f)} \wedge \pi_{0,1} \alpha_{(\rho \circ f)}) - \frac{i}{2} d^c \alpha_{(\rho \circ f)} \right) (L, \overline{L}) \le 0 .$$
		for all $L \in \Null_p$.
		Therefore, this implies $S(\O_1) \le S(\O_2)$ by using Theorem \ref{thm main DF,S D'Angelo 1-form} and the equation (\ref{4.5}). The same argument for $f^{-1}$ gives $S(\O_1) \ge S(\O_2)$, and this completes the proof. The statement $DF(\O_1) = DF(\O_2)$ follows from the same argument as in the Steinness index case.

	\end{proof}

\vspace{5mm}

	
	\textit{\bf Acknowledgements}: 
	This work is completed while the author visited the University of Wuppertal in May, 2019. The author would like to thank Professor N. Shcherbina, T. Pawlaschyk, T. Harz and H. Herrmann not only for warm hospitality but also for many fruitful conversations.
	He is also grateful to M. Adachi for his careful reading and comments to imporve this paper.  
	

	\vspace{5mm}

	\bibliographystyle{amsplain}

\begin{thebibliography}{10}
		
		\bibitem{BoasStraube1993}
		H. P. Boas and E. J. Straube,
		\emph{de Rham cohomology of manifolds containing the points of infinite type, and Sobolev estimates for the $\overline{\partial}$-Neumann problem},
		J. Geom. Anal., \textbf{3}, no. 3, 225-235. (1993)
		
		\bibitem{Boggess1991}
		A. Boggess,
		\emph{CR manifolds and the tangential Cauchy-Riemann complex},
		Studies in Advanced Mathematics.CRC Press, Boca Raton, FL, 1991. xviii+364 pp. ISBN: 0-8493-7152-X 
		
		
		\bibitem{D'Angelo1980}
		J. P. D'Angelo,
		\emph{Finite type conditions for real hypersurfaces},
		J. Differential Geom., \textbf{14}, no. 1, 59-66. (1980)
		
		\bibitem{D'Angelo1987}
		J. P. D'Angelo,
		\emph{Iterated commutators and derivatives of the Levi form},
		Complex analysis (University Park, Pa., 1986), Lecture Notes in Math., 1268, Springer, Berlin, 103-110. (1987)
		
		\bibitem{DieForn1977}
		K. Diederich and J. E. Forn{\ae}ss, 
		\emph{Pseudoconvex Domains: bounded strictly plurisubharmonic exhaustion functions}, 
		Invent. Math. \textbf{39}, no. 2, 129-–141. (1977)
		
		\bibitem{DieForn1977-2}
		K. Diederich and J. E. Forn{\ae}ss, 
		\emph{Pseudoconvex Domains: an example with nontrivial neighborhood}, 
		Math. Ann. \textbf{225}, no. 3, 275-–292. (1977)
		
		
		
		\bibitem{Liu2017}
		B. Liu,
		\emph{The Diederich-Forn{\ae}ss index 1 : for domains of non-trivial index}, 
		arXiv:1701.00293v4. (2017)
		
		
		\bibitem{Straube2010}
		E. J. Straube,
		\emph{Lectures on the $L^2$-Sobolev theory of the $\overline{\partial}$-Neumann problem},
		ESI Lectures in Mathematics and Physics. (2010)
		
		
		\bibitem{Yum2017}
		J. Yum,
		\emph{Invariance of Diederich-Fornaes index},
		arXiv:1702.02301. (2017)
	
		\bibitem{Yum2019}
		J. Yum,
		\emph{On the Steinness index},
		J. Geom. Anal., \textbf{29}, no. 2, 1583-1607. (2019)
	
	
	
	\end{thebibliography}

\end{document}